\newcommand{\lang}{\mathcal{L}}
\newcommand{\limp}{\longrightarrow}
\newcommand{\pow}{\mathcal{P}}
\newcommand{\var}{\mathrm{var}}
\newcommand{\lrri}{\vdash^{re}}
\theoremstyle{definition}
\newtheorem{theorem}{Theorem}[section]
\newtheorem{lemma}[theorem]{Lemma}
\newtheorem{corollary}[theorem]{Corollary}
\newtheorem{definition}[theorem]{Definition}
\newtheorem{remark}[theorem]{Remark}
\newtheorem{example}[theorem]{Example}
  \title{Relational Companions of Logics\footnote{A version of this article has been submitted to the Indian Conference on Logic and its Applications 2025}}
  \author[1]{Sankha S. Basu}
  \author[1]{Sayantan Roy}
  \date{August 30, 2024}
  \affil[1]{Department of Mathematics\\
  Indraprastha Institute of Information Technology-Delhi\\
  New Delhi, India.}
\begin{document}
\maketitle  
\begin{abstract}
\noindent The variable inclusion companions of logics have lately been thoroughly studied by multiple authors. There are broadly two types of these companions: the left and the right variable inclusion companions. Another type of companions of logics induced by Hilbert-style presentations (Hilbert-style logics) were introduced in \cite{BasuChakraborty2021}. A sufficient condition for the restricted rules companion of a Hilbert-style logic to coincide with its left variable inclusion companion was proved there, while a necessary condition remained elusive. The present article has two parts. In the first part, we give a necessary and sufficient condition for the left variable inclusion and the restricted rules companions of a Hilbert-style logic to coincide. In the rest of the paper, we recognize that the variable inclusion restrictions used to define variable inclusion companions of a logic $\langle\lang,\vdash\rangle$ are relations from $\pow(\lang)$ to $\lang$. This leads to a more general idea of a relational companion of a logical structure, a framework that we borrow from the field of universal logic. We end by showing that even Hilbert-style logics and the restricted rules companions of these can be brought under the umbrella of the general notions of logical structures and their relational companions that are discussed here.
\end{abstract}

\textbf{Keywords:} 
Companion logics; Universal logic; Logics of variable inclusion.

\tableofcontents

\section{Introduction}
The \emph{logics of variable inclusion} have recently been rigorously studied, e.g., in \cite{Bonzio2020,BonzioPaoliPraBaldi2022,PaoliPraBaldiSzmuc2021}. These companion logics come in four flavors, viz., the \emph{left}, the \emph{right}, the \emph{pure left}, and the \emph{pure right variable inclusion companion} logics. The definitions and various examples of each of these classes can be found in the above references. It is well-known that the left variable inclusion companion of classical propositional logic (CPC) is the paraconsistent weak Kleene logic (PWK). A simple Hilbert-style presentation of PWK, consisting of the same set of axioms as CPC and a restricted version of the classical rule of modus ponens, was presented in \cite{Bonzio2017}. This led to the following natural question. Can we always obtain a Hilbert-style presentation of the left variable inclusion companion logic from that of the original logic (if it has one, of course) by just restricting the rules of inference? The answer to this question was shown to be negative in \cite{BasuChakraborty2021}. In the course of the argument, the \emph{restricted rules companion} of a Hilbert-style logic, i.e., a logic induced syntactically by a Hilbert-style presentation, was introduced. A sufficient condition for the restricted rules companion to coincide with the left variable inclusion companion of a Hilbert-style logic was proved as well. However, a necessary condition for the same remained unattained. 

The present article has two main parts. In the first, a necessary and sufficient condition for the left variable inclusion and the restricted rules companions of a Hilbert-style logic to coincide, is presented. 

In the second part, we generalize the notions of variable inclusion companion and restricted rules companion logics. This is done using the framework of \emph{logical structures} from universal logic \cite{Beziau1994,Beziau2006}. 

A logical structure is a pair $\mathcal{S}=\langle\lang,\vdash\rangle$, where $\lang$ is a set and $\vdash\,\subseteq\pow(\lang)\times\lang$. A \emph{logic} is a logical structure $\mathcal{S}=\langle\lang,\vdash\rangle$, where $\lang$ is the set of formulas defined inductively, in the usual way, over a set of variables $V$, using a finite set of connectives/operators called the \emph{signature/type}. In other words, $\lang$ is the formula algebra over $V$ of some type. The formula algebra has the universal mapping property for the class of all algebras of the same type as $\lang$ over $V$, i.e., any function $f:V\to A$, where $A$ is the universe of an algebra $\mathbf{A}$ of the same type as $\mathcal{L}$, can be uniquely extended to a homomorphism from $\lang$ to $\mathbf{A}$ (see \cite{FontJansanaPigozzi2003,Font2016} for more details). We, however, do not assume any condition on the $\vdash$-relation. For any $\alpha\in\lang$, $\var(\alpha)$ denotes the set of all the variables occurring in $\alpha$, and for any $\Delta\subseteq\lang$, $\var(\Delta)=\displaystyle\bigcup_{\alpha\in\Delta}\var(\alpha)$.

\section{Left variable inclusion and restricted rules companions}

In this section, we will deal exclusively with logics, in the usual sense, as described in the previous section. As mentioned earlier, the \emph{logics of variable inclusion} have recently been rigorously studied, e.g., in \cite{Bonzio2020,BonzioPaoliPraBaldi2022,PaoliPraBaldiSzmuc2021}. The following definition of a left variable inclusion companion can be found in these. 

\begin{definition}\label{dfn:lvarinclusion}
Suppose $\mathcal{S}=\langle\lang,\vdash\rangle$ is a logic. The \emph{left variable inclusion companion} of $\mathcal{S}$, denoted by $\mathcal{S}^{l}=\langle\lang,\vdash^{l}\rangle$, is defined as follows. For any $\Gamma\cup\{\alpha\}\subseteq\lang$, 
    \[
    \Gamma\vdash^{l}\alpha\;\hbox{iff there is a}\;\Delta\subseteq\Gamma\;\hbox{such that}\;\var(\Delta)\subseteq\var(\alpha)\;\hbox{and}\;\Delta\vdash\alpha.
    \]
\end{definition}

The restricted rules companion of a Hilbert-style logic, i.e., a logic induced syntactically by a Hilbert-style presentation, was introduced in \cite{BasuChakraborty2021} as follows.

\begin{definition}\label{dfn:res_rules_comp}
Suppose $\mathcal{S}=\langle\lang,\vdash\rangle$ is a Hilbert-style logic with $A\subseteq\lang$ as the set of axioms and $R_\mathcal{S}\subseteq\pow(\lang)\times\lang$ as the set of rules of inference. The \emph{restricted rules companion} of $\mathcal{S}$, denoted by $\mathcal{S}^{re}=\langle\lang,\lrri\rangle$, is then defined as the Hilbert-style logic with the following sets of axioms and rules.

\begin{tabular}{lcl}
       Set of axioms&=&$A$, and\\
       set of rules of inference&=&$R_{\mathcal{S}^{re}}=\left\{\dfrac{\Gamma}{\alpha}\in R_\mathcal{S}\mid\,\var(\Gamma)\subseteq\var(\alpha)\right\}$.
  \end{tabular}
\end{definition}

Suppose $\mathcal{S}=\langle\lang,\vdash\rangle$ is a Hilbert-style logic and 
$\mathcal{S}^l=\langle\lang,\vdash^l\rangle$, $\mathcal{S}^{re}=\langle\lang,\lrri\rangle$ be its left variable inclusion and restricted rules companion logics, respectively. It was established that, $\lrri\,\subseteq\,\vdash^l$ (\cite[Theorem 3.6]{BasuChakraborty2021}), but the reverse inclusion, i.e., $\vdash^l\,\subseteq\,\lrri$ is not guaranteed  (\cite[Remark 3.7]{BasuChakraborty2021}). The following sufficient condition for the latter inclusion was also given in this paper.

\begin{theorem}[{\cite[Theorem 4.3]{BasuChakraborty2021}}]
Suppose $\mathcal{S}=\langle\lang,\vdash\rangle$ is a finitary Hilbert-style logic such that $\dfrac{\alpha,\alpha\limp\beta}{\beta}$ (modus ponens [MP]) is a rule of inference in $\mathcal{S}$. Suppose further that the Deduction theorem holds in $\mathcal{S}$. Then the restricted rules companion of $\mathcal{S}$ coincides with the left variable inclusion companion of $\mathcal{S}$, i.e., $\lrri\,=\,\vdash^l$. 
\end{theorem}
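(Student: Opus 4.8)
The plan is to prove the nontrivial inclusion $\vdash^l \subseteq \lrri$, since $\lrri \subseteq \vdash^l$ is already available (\cite[Theorem 3.6]{BasuChakraborty2021}). Suppose $\Gamma \vdash^l \alpha$. By Definition \ref{dfn:lvarinclusion} there is a $\Delta \subseteq \Gamma$ with $\var(\Delta) \subseteq \var(\alpha)$ and $\Delta \vdash \alpha$. First I would use finitariness to replace $\Delta$ by a finite $\Delta_0 = \{\delta_1, \ldots, \delta_n\} \subseteq \Delta$ with $\Delta_0 \vdash \alpha$; note $\var(\delta_i) \subseteq \var(\alpha)$ for each $i$. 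Iterating the Deduction theorem then yields $\vdash \beta$, where $\beta = \delta_1 \limp (\delta_2 \limp \cdots \limp (\delta_n \limp \alpha))$, and because every $\var(\delta_i) \subseteq \var(\alpha)$ we obtain the crucial bookkeeping fact $\var(\beta) = \var(\alpha)$.

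The heart of the argument is the following Key Lemma: every theorem of $\mathcal{S}$ is a theorem of $\mathcal{S}^{re}$, i.e. $\vdash \gamma$ implies $\lrri \gamma$. Granting this, apply it to get $\lrri \beta$ and then peel off the antecedents $\delta_1, \ldots, \delta_n$ one at a time by modus ponens. Each such step detaches $\delta_i$ from an implication whose consequent is $\delta_{i+1} \limp \cdots \limp (\delta_n \limp \alpha)$, a formula whose variable set is exactly $\var(\alpha)$; since $\var(\delta_i) \subseteq \var(\alpha)$, every one of these applications meets the variable-inclusion side condition and is therefore a legal \emph{restricted} instance of modus ponens. Hence $\Delta_0 \lrri \alpha$, and by monotonicity of Hilbert-style derivability $\Gamma \lrri \alpha$, as required. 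This is precisely where the hypothesis $\var(\Delta) \subseteq \var(\alpha)$ built into $\vdash^l$ is indispensable.

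To prove the Key Lemma I would induct on the length of a $\vdash$-derivation of $\gamma$. If $\gamma$ is an axiom there is nothing to do, since $\mathcal{S}$ and $\mathcal{S}^{re}$ share the axiom set $A$. If $\gamma$ is obtained from earlier theorems $\gamma_1, \ldots, \gamma_m$ by a rule $\frac{\gamma_1, \ldots, \gamma_m}{\gamma} \in R_{\mathcal{S}}$ (modus ponens being the case $m=2$), the difficulty is that this instance need not respect variable inclusion and so may have been discarded when forming $R_{\mathcal{S}^{re}}$. I would repair it by substitution: let $\sigma$ send every variable occurring in the premises but not in $\gamma$ to a fixed variable of $\var(\gamma)$, and fix all remaining variables. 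Then $\sigma(\gamma) = \gamma$, the substituted instance $\frac{\sigma(\gamma_1), \ldots, \sigma(\gamma_m)}{\gamma}$ now satisfies $\var(\{\sigma(\gamma_1), \ldots, \sigma(\gamma_m)\}) \subseteq \var(\gamma)$ and hence lies in $R_{\mathcal{S}^{re}}$, while the induction hypothesis applied to the $\sigma$-transformed derivations (of the same length) gives $\lrri \sigma(\gamma_i)$ for each $i$; a single application of the restricted rule then yields $\lrri \gamma$.

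The main obstacle is this last step, and it is exactly where the standard structurality (substitution-invariance) of a Hilbert-style presentation is used: I am assuming $A$ and $R_{\mathcal{S}}$ are schematic, so that $\sigma$-images of axioms are axioms and $\sigma$-images of rule instances are rule instances — an assumption already implicit in the usual statement of the Deduction theorem. One delicate corner remains: when $\var(\gamma) = \emptyset$ there is no target variable for $\sigma$, so variable-free conclusions must be handled separately. Pinning down the precise structurality hypothesis under which the Key Lemma holds, and checking that this edge case causes no trouble, is the part I expect to require the most care.
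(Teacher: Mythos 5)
Your proposal is correct and follows essentially the same route as the proof this paper relies on: the theorem is stated here without proof, imported from \cite{BasuChakraborty2021}, and the argument there is exactly your decomposition --- pass to a finite $\Delta_0\subseteq\Delta$ by finitariness, iterate the Deduction theorem to get a theorem $\delta_1\limp(\cdots\limp(\delta_n\limp\alpha))$, transfer it to $\mathcal{S}^{re}$, and detach the antecedents by restricted modus ponens, each instance of which is legal because $\var(\Delta_0)\subseteq\var(\alpha)$. Your Key Lemma is precisely Theorem 3.4 of \cite{BasuChakraborty2021} (theorems of $\mathcal{S}$ and $\mathcal{S}^{re}$ coincide), which the present paper itself invokes in the proof of Theorem \ref{thm:l1-l2,re1-re2}, and the structurality (substitution-invariance) of axioms and rules that you flag as an assumption is indeed the standing convention on Hilbert-style presentations under which that lemma is proved there, with your closed-conclusion edge case handled by substituting the closed formula $\gamma$ itself for the offending variables.
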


However, a necessary condition for $\vdash^l\,=\,\lrri$ remained elusive. In the rest of this section, we investigate this further and provide a necessary and sufficient condition for the two companion logics to coincide. The following lemmas list some straightforward inferences that can be drawn from the definitions of left variable inclusion and restricted rules companions.

\begin{lemma}\label{lem:Sl-S,Sre-S}
Suppose $\mathcal{S}=\langle\lang,\vdash\rangle$ is a logic.
\begin{enumerate}[label=(\roman*)]
    \item $\mathcal{S}^l=\langle\lang,\vdash^l\rangle$ is monotonic. Moreover, if $\mathcal{S}$ is monotonic, then $\vdash^l\,\subseteq\,\vdash$.
    \item $(\mathcal{S}^l)^l=\mathcal{S}^l$, i.e., $(\vdash^l)^l\,=\,\vdash^l$.
    \item Suppose $\mathcal{S}$ is a Hilbert-style logic. Then, $\mathcal{S}^{re}$ is monotonic and moreover, $\lrri\,\subseteq\,\vdash$.
    \item Suppose $\mathcal{S}$ is a Hilbert-style logic. Then, $(\mathcal{S}^{re})^{re}=\mathcal{S}^{re}$, i.e., $(\lrri)^{re}\,=\,\lrri$.
\end{enumerate}
\end{lemma}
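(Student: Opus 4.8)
The plan is to verify all four parts by directly unwinding the definitions; the lemma merely records monotonicity- and idempotence-type facts, and I expect each to follow from a short witness-chasing argument. I would organize the work as two parallel pairs: (i)--(ii) concern the semantic companion $\vdash^l$, while (iii)--(iv) concern the syntactic companion $\lrri$.

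For (i), given $\Gamma\vdash^l\alpha$ with witness $\Delta\subseteq\Gamma$ satisfying $\var(\Delta)\subseteq\var(\alpha)$ and $\Delta\vdash\alpha$, the same $\Delta$ lies inside any $\Gamma'\supseteq\Gamma$ and hence witnesses $\Gamma'\vdash^l\alpha$, giving monotonicity. If in addition $\mathcal{S}$ is monotonic, then $\Delta\subseteq\Gamma$ together with $\Delta\vdash\alpha$ yields $\Gamma\vdash\alpha$, so $\vdash^l\,\subseteq\,\vdash$. For (ii) I would prove the two inclusions separately. The forward inclusion $\vdash^l\,\subseteq\,(\vdash^l)^l$ holds because a witness $\Delta$ for $\Gamma\vdash^l\alpha$ already satisfies $\var(\Delta)\subseteq\var(\alpha)$ and, reusing $\Delta$ as its own sub-witness, also satisfies $\Delta\vdash^l\alpha$; thus $\Delta$ witnesses $\Gamma\,(\vdash^l)^l\,\alpha$. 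Conversely, a double witness ($\Delta\subseteq\Gamma$ for the outer relation and $\Delta'\subseteq\Delta$ for $\Delta\vdash^l\alpha$) collapses to the single nested witness $\Delta'$, which satisfies $\Delta'\subseteq\Gamma$, $\var(\Delta')\subseteq\var(\alpha)$, and $\Delta'\vdash\alpha$; hence $\Gamma\vdash^l\alpha$. Notably, no monotonicity hypothesis is needed in (ii).

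For (iii) and (iv) I would exploit that $\mathcal{S}^{re}$ is itself a Hilbert-style logic, so $\lrri$ is the derivability relation generated by the axioms $A$ and the restricted rule set $R_{\mathcal{S}^{re}}$. Under the standard notion of Hilbert-style derivation such a relation is monotonic, since prefixing additional premises to a derivation leaves it a valid derivation; this gives the first half of (iii). Because $R_{\mathcal{S}^{re}}\subseteq R_{\mathcal{S}}$ while the axioms are identical, every $\mathcal{S}^{re}$-derivation is also an $\mathcal{S}$-derivation, whence $\lrri\,\subseteq\,\vdash$. For (iv), the crux is that the rule-restriction operation is idempotent: by construction every rule in $R_{\mathcal{S}^{re}}$ already satisfies $\var(\Gamma)\subseteq\var(\alpha)$, so imposing the same constraint a second time leaves the rule set unchanged, i.e.\ $R_{(\mathcal{S}^{re})^{re}}=R_{\mathcal{S}^{re}}$; with identical axioms, the two presentations generate the same logic, so $(\lrri)^{re}=\lrri$.

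I do not anticipate a real obstacle, as every claim is essentially definitional. The only points that warrant explicit care are making precise in (iii) that a Hilbert-style consequence relation is monotonic and that inclusion of rule sets transfers to inclusion of derivability, and observing in (ii) and (iv) that the variable-inclusion condition $\var(\cdot)\subseteq\var(\alpha)$ is \emph{self-reproducing}---this single feature is what simultaneously makes the semantic companion $(\cdot)^l$ and the syntactic companion $(\cdot)^{re}$ idempotent.
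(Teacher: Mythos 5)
Your proposal is correct and follows essentially the same route as the paper: parts (i) and (ii) by direct witness-chasing from the definition of $\vdash^l$ (which the paper dismisses as ``straightforward'' but you spell out correctly), part (iii) from the monotonicity of Hilbert-style logics together with $R_{\mathcal{S}^{re}}\subseteq R_{\mathcal{S}}$, and part (iv) from the observation that restricting an already-restricted rule set changes nothing, so $\mathcal{S}^{re}$ and $(\mathcal{S}^{re})^{re}$ share a Hilbert-style presentation. No gaps; your explicit note that (ii) needs no monotonicity hypothesis is a nice touch consistent with the paper's statement.
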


\begin{proof} 
Parts (i) and (ii) follow straightforwardly from the definition of left variable inclusion companions. For part (iii), we recall that every Hilbert-style logic is monotonic. Thus, in fact, $\mathcal{S}$ and $\mathcal{S}^{re}$, being Hilbert-style logics, are both monotonic. That $\vdash^{re}\,\subseteq\,\vdash$, follows from the definition of $\mathcal{S}^{re}$.

For part (iv), let $A$ and $R_\mathcal{S}$ as its sets of axioms and rules of inference, respectively, of $\mathcal{S}$. Then $\mathcal{S}^{re}$ has $A$ and $R_{\mathcal{S}^{re}}$, as its sets of axioms and rules, where $R_{\mathcal{S}^{re}}$ is as described in the definition of a restricted rules companion. Now, since any rule in $R_{\mathcal{S}^{re}}$ is already restricted, $A$ and $R_{\mathcal{S}^{re}}$ also comprise a Hilbert-style presentation for $(\mathcal{S}^{re})^{re}$. Thus, $(\mathcal{S}^{re})^{re}=\mathcal{S}^{re}$, i.e., $(\lrri)^{re}\,=\,\lrri$.
\end{proof}

\begin{theorem}\label{thm:l1-l2,re1-re2}
    Suppose $\mathcal{S}_1=\langle\lang,\vdash_1\rangle$ and $\mathcal{S}_2=\langle\lang,\vdash_2\rangle$ are two logics such that $\vdash_1\,\subseteq\,\vdash_2$. Then, $\vdash_1^l\,\subseteq\,\vdash_2^l$. Moreover, if $\mathcal{S}_1$ and $\mathcal{S}_2$ are Hilbert-style logics, such that for any $\dfrac{\Gamma}{\alpha}\in R_{\mathcal{S}_1^{re}}$, $\Gamma\lrri_2\alpha$, then $\lrri_1\,\subseteq\,\lrri_2$.
\end{theorem}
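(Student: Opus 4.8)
The plan is to establish the two assertions separately: the first is a direct consequence of Definition \ref{dfn:lvarinclusion}, while the second is proved by induction on the length of a Hilbert-style derivation in $\mathcal{S}_1^{re}$.

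For the first assertion, suppose $\Gamma\vdash_1^l\alpha$. By Definition \ref{dfn:lvarinclusion} there is a $\Delta\subseteq\Gamma$ with $\var(\Delta)\subseteq\var(\alpha)$ and $\Delta\vdash_1\alpha$. Since $\vdash_1\,\subseteq\,\vdash_2$, the very same $\Delta$ satisfies $\Delta\vdash_2\alpha$, and the variable-inclusion condition $\var(\Delta)\subseteq\var(\alpha)$ is untouched. Thus the identical witness $\Delta$ certifies $\Gamma\vdash_2^l\alpha$, and hence $\vdash_1^l\,\subseteq\,\vdash_2^l$. No further work is needed here.

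For the second assertion, assume $\Gamma\lrri_1\alpha$ and fix a derivation $\beta_1,\dots,\beta_n=\alpha$ of $\alpha$ from $\Gamma$ in $\mathcal{S}_1^{re}$, built from the axioms $A_1$ of $\mathcal{S}_1$ and the restricted rules in $R_{\mathcal{S}_1^{re}}$. I would show by induction on $i$ that $\Gamma\lrri_2\beta_i$ for every $i$, so that $i=n$ gives the conclusion. The two structural facts I would lean on throughout are that $\mathcal{S}_2^{re}$ is monotonic (Lemma \ref{lem:Sl-S,Sre-S}(iii)) and that Hilbert-style derivability is transitive, so derivations witnessing the premises of a rule may be concatenated in front of a derivation that uses those premises. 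The inductive step then splits into the standard cases: if $\beta_i\in\Gamma$ then $\Gamma\lrri_2\beta_i$ holds trivially; and if $\beta_i$ is obtained from earlier lines $\Gamma'\subseteq\{\beta_1,\dots,\beta_{i-1}\}$ by a rule $\dfrac{\Gamma'}{\beta_i}\in R_{\mathcal{S}_1^{re}}$, then the induction hypothesis yields $\Gamma\lrri_2\gamma$ for each $\gamma\in\Gamma'$, the standing hypothesis yields $\Gamma'\lrri_2\beta_i$, and transitivity delivers $\Gamma\lrri_2\beta_i$.

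The step requiring the most care — and where I expect the hypothesis to do its real work — is the treatment of the axioms $\beta_i\in A_1$, since the theorem carries no explicit comparison between $A_1$ and $A_2$. Because a restricted rules companion retains every axiom, I would at the outset identify each axiom with a zero-premise rule; for such a rule $\var(\emptyset)=\emptyset\subseteq\var(\beta_i)$ holds vacuously, so $\dfrac{\emptyset}{\beta_i}\in R_{\mathcal{S}_1^{re}}$, and the standing hypothesis then gives $\emptyset\lrri_2\beta_i$, whence $\Gamma\lrri_2\beta_i$ by monotonicity. Making this identification explicit is precisely what allows the single hypothesis on $R_{\mathcal{S}_1^{re}}$ to cover the base case and the inductive case uniformly; without it one would be forced to assume separately that the axioms of $\mathcal{S}_1$ are $\lrri_2$-derivable. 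I therefore regard pinning down this convention, rather than any computation, as the main obstacle.
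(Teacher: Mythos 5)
Your first part is exactly the paper's argument and is fine. Your second part also has the right overall shape (translate an $\mathcal{S}_1^{re}$-derivation line by line into $\mathcal{S}_2^{re}$, using monotonicity and cut), but your treatment of the axioms contains a genuine gap. In this paper (Definition \ref{dfn:res_rules_comp}), a Hilbert-style logic carries a set of axioms $A$ and a set of rules $R_{\mathcal{S}}$ as \emph{separate} data, and the restricted rules companion keeps all of $A$ while restricting only $R_{\mathcal{S}}$; the identification of axioms with zero-premise rules is adopted by the paper only in Section 3, not in Section 2 where this theorem is stated. So for an axiom $\beta_i\in A_1$, the pair $\dfrac{\emptyset}{\beta_i}$ is simply not an element of $R_{\mathcal{S}_1^{re}}$, and the hypothesis ``for any $\dfrac{\Gamma}{\alpha}\in R_{\mathcal{S}_1^{re}}$, $\Gamma\lrri_2\alpha$'' says nothing about $\beta_i$. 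Your fix --- declaring at the outset that every axiom is a zero-premise rule --- silently strengthens that hypothesis: it now additionally demands $\lrri_2\beta$ for every $\beta\in A_1$, which a user of the theorem would have to verify separately. You would thus be proving a formally weaker statement than the one in the paper, and your closing remark that without this convention ``one would be forced to assume separately that the axioms of $\mathcal{S}_1$ are $\lrri_2$-derivable'' is exactly where the needed idea is missing: no such assumption is required.

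The missing idea is to use the standing hypothesis $\vdash_1\,\subseteq\,\vdash_2$, which --- tellingly --- your second part never invokes. If $\beta_i\in A_1$, then $\vdash_1\beta_i$, hence $\vdash_2\beta_i$; and by \cite[Theorem 3.4]{BasuChakraborty2021}, which says that a Hilbert-style logic and its restricted rules companion have the same theorems, this yields $\lrri_2\beta_i$. Monotonicity of $\mathcal{S}_2^{re}$ (Lemma \ref{lem:Sl-S,Sre-S}(iii)) then gives $\Gamma\lrri_2\beta_i$, and this is how the paper closes the axiom case. With that replacement, the remaining cases of your induction (elements of $\Gamma$ trivially, rule instances via the hypothesis plus cut/transitivity of Hilbert-style derivability) go through and coincide in substance with the paper's derivation-splicing argument.
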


\begin{proof}
    Suppose $\Sigma\cup\{\alpha\}\subseteq\lang$ such that $\Sigma\vdash_1^l\alpha$. Then there exists $\Delta\subseteq\Sigma$ with $\var(\Delta)\subseteq\var(\alpha)$ such that $\Delta\vdash_1\alpha$. Since $\vdash_1\,\subseteq\,\vdash_2$, $\Delta\vdash_2\alpha$. Thus, $\Sigma\vdash_2^l\alpha$. Hence, $\vdash_1^l\,\subseteq\,\vdash_2^l$.

    Next, suppose $\mathcal{S}_1,\mathcal{S}_2$ are Hilbert-style logics such that for any $\dfrac{\Gamma}{\alpha}\in R_{\mathcal{S}_1^{re}}$, $\Gamma\lrri_2\alpha$. Now, let $\Sigma\cup\{\alpha\}\subseteq\lang$ such that $\Sigma\lrri_1\alpha$. Then, there exists a derivation $D=\langle\alpha_1,\ldots,\alpha_n=\alpha\rangle$ of $\alpha$ from $\Sigma$, where for each $1\le i\le n$, $\alpha_i$ is either an axiom of $\mathcal{S}_1$, or an element of $\Sigma$, or is obtained by applying a rule of inference in $R_{\mathcal{S}_1^{re}}$. If $\alpha_i$ is an axiom, then $\vdash_1\alpha_i$, and since $\vdash_1\,\subseteq\,\vdash_2$, $\vdash_2\alpha_i$. Then, by \cite[Theorem 3.4]{BasuChakraborty2021}, $\lrri_2\alpha_i$. So, there exists a derivation of $\alpha_i$ in $\mathcal{S}_2^{re}$. Now, suppose $\alpha_i$ is obtained by applying a rule of inference $\dfrac{\Delta}{\alpha_i}\in R_{\mathcal{S}_1^{re}}$, where $\Delta\subseteq\{\alpha_1,\ldots,\alpha_{i-1}\}$. Then, by assumption, $\Delta\lrri_2\alpha_i$, and hence, there exists a derivation of $\alpha_i$ from $\Delta$ in $\mathcal{S}_2^{re}$. So, we can translate the derivation $D$ of $\alpha$ from $\Sigma$ as follows. For each $1\le i\le n$, if $\alpha_i$ is an axiom, then we replace $\alpha_i$ by the elements of a derivation of $\alpha_i$ in $\mathcal{S}_2^{re}$. If $\alpha_i$ is obtained from $\Delta\subseteq\{\alpha_1,\ldots,\alpha_{i-1}\}$, by using a rule of inference in $\mathcal{S}_1^{re}$, then we replace $\alpha_i$ by the elements of a derivation of $\alpha_i$ from $\Delta$ in $\mathcal{S}_2^{re}$. Finally, if $\alpha_i\in\Sigma$, then we keep it unchanged. Let $D^\prime$ be the resulting sequence of formulas. Clearly, $D^\prime$ is a derivation of $\alpha$ from $\Sigma$ in $\mathcal{S}_2^{re}$. Thus, $\Sigma\lrri_2\alpha$. Hence, $\lrri_1\,\subseteq\,\lrri_2$.
\end{proof}

\begin{remark}
    It is not true, in general, that if $\mathcal{S}_1=\langle\lang,\vdash_1\rangle$ and $\mathcal{S}_2=\langle\lang,\vdash_2\rangle$ are two Hilbert-style logics with $\vdash_1\,\subseteq\,\vdash_2$, then $\lrri_1\,\subseteq\,\lrri_2$. This can be seen from the following example.

    Suppose $\lang$ is the formula algebra over a countable set of variables $V$ of type $\{\land,\lor\}$. Let $\mathcal{S}_1=\langle\lang,\vdash_1\rangle$ be the Hilbert-style logic with an empty set of axioms and the following two rules of inference.
    \[
    R_1:\,\dfrac{\alpha\land\beta}{\alpha}\quad\hbox{and}\quad R_2:\,\dfrac{\alpha}{\alpha\lor\beta},\quad\hbox{where }\alpha,\beta\in\lang.
    \]
    $\mathcal{S}_1$ is the same logic that was used in \cite[Remark 3.7]{BasuChakraborty2021} to show that the left variable inclusion companion of a Hilbert-style logic can differ from its restricted rules companion.
    
    Let $\mathcal{S}_2=\langle\lang,\vdash_2\rangle$ be the Hilbert-style logic with an empty set of axioms and the following rule of inference in addition to $R_1, R_2$ above.
    \[
    R_3:\,\dfrac{\alpha\land\beta}{\alpha\lor\beta},\quad\hbox{where }\alpha,\beta\in\lang.
    \]
    Clearly, $\vdash_2\,\subseteq\,\vdash_1$, since $R_3$ can be derived in $\mathcal{S}_1$ as follows. For any $\alpha,\beta\in\lang$,
    \[
    \begin{array}{lcl}
         \alpha\land\beta&\vdash_1&1.\, \alpha\land\beta\\
         &&2.\,\alpha\qquad [R_1\hbox{ on (1)}]\\
         &&3.\,\alpha\lor\beta\qquad [R_2\hbox{ on (2)}]
    \end{array}
    \]
    We note that $\mathcal{S}_1^{re}=\langle\lang,\lrri_1\rangle$, is the logic induced by the same set of axioms and the following two rules of inference.
    \[
    \begin{array}{ll}
         R_1^\prime:\,\dfrac{\alpha\land\beta}{\alpha}&\hbox{such that }\var(\alpha\land\beta)\subseteq\var(\alpha),\hbox{ i.e., }\var(\beta)\subseteq\var(\alpha),\hbox{ and}\\
         &\\
         R_2:\,\dfrac{\alpha}{\alpha\lor\beta},&
    \end{array}
    \]
    while $\mathcal{S}_2^{re}=\langle\lang,\lrri_1\rangle$, is the logic induced by the same set of axioms and the rules $R_1^\prime,R_2$, and $R_3$. ($R_2$ and $R_3$ do not need to be restricted as  $\var(\alpha)\subseteq\var(\alpha\lor\beta)$ and $\var(\alpha\land\beta)=\var(\alpha\lor\beta)$ for any $\alpha,\beta\in\lang$.)
    
    Now, suppose $p,q$ are distinct variables. Then, while $p\land q\lrri_2 p\lor q$, $p\land q\not\lrri_1 p\lor q$, since we cannot apply $R_1^\prime$ and $R_2$ to derive $p\lor q$ from $p\land q$ in $\mathcal{S}_1$, as shown in \cite[Remark 3.7]{BasuChakraborty2021}.

    Thus, $\lrri_2\,\not\subseteq\,\lrri_1$ although $\vdash_2\,\subseteq\,\vdash_1$.
\end{remark}

\begin{theorem}
    Suppose $\mathcal{S}=\langle\lang,\vdash\rangle$ is a Hilbert-style logic. Then, $\mathcal{S}^l=\mathcal{S}^{re}$ iff $(\mathcal{S}^{re})^l=\mathcal{S}^l$. In other words, $\vdash^l\,=\,\lrri$ iff $(\lrri)^l\,=\,\vdash^l$, i.e., for all $\Gamma\cup\{\alpha\}\subseteq\lang$, $\Gamma\vdash^l\alpha$ iff there exists $\Delta\subseteq\Gamma$ such that $\var(\Delta)\subseteq\var(\alpha)$ and $\Delta\lrri\alpha$. 
\end{theorem}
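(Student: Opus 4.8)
The plan is to derive both implications from the idempotency of the left variable inclusion construction (Lemma~\ref{lem:Sl-S,Sre-S}(ii)), the monotonicity of $\mathcal{S}^{re}$ (Lemma~\ref{lem:Sl-S,Sre-S}(iii)), and the inclusion $\lrri\,\subseteq\,\vdash^l$ established before the statement (\cite[Theorem 3.6]{BasuChakraborty2021}). The forward direction is immediate: assuming $\vdash^l\,=\,\lrri$, substitution gives $(\lrri)^l\,=\,(\vdash^l)^l$, and the right-hand side equals $\vdash^l$ by Lemma~\ref{lem:Sl-S,Sre-S}(ii). Hence $(\lrri)^l\,=\,\vdash^l$.

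For the converse, the key observation I would isolate first is that $(\lrri)^l\,\subseteq\,\lrri$. This follows by applying Lemma~\ref{lem:Sl-S,Sre-S}(i) not to $\mathcal{S}$ itself but to the logic $\mathcal{S}^{re}=\langle\lang,\lrri\rangle$: since $\mathcal{S}^{re}$ is monotonic by Lemma~\ref{lem:Sl-S,Sre-S}(iii), the second clause of Lemma~\ref{lem:Sl-S,Sre-S}(i) yields that the left variable inclusion companion of $\mathcal{S}^{re}$ is contained in $\mathcal{S}^{re}$, i.e., $(\lrri)^l\,\subseteq\,\lrri$. Recognizing that the generic inclusion ``$\vdash^l\,\subseteq\,\vdash$ for monotonic logics'' can be instantiated at $\mathcal{S}^{re}$ is the only non-bookkeeping idea in the argument, and is where I expect a reader to need to pause; everything else is substitution and chaining of inclusions.

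Given this observation, the converse is a short chain. Assume $(\lrri)^l\,=\,\vdash^l$. Then $\vdash^l\,=\,(\lrri)^l\,\subseteq\,\lrri$ by the observation just made, while $\lrri\,\subseteq\,\vdash^l$ holds by \cite[Theorem 3.6]{BasuChakraborty2021}. The two inclusions together force $\vdash^l\,=\,\lrri$, as required. The one hypothesis-checking point I would flag explicitly is that Lemma~\ref{lem:Sl-S,Sre-S}(i) is stated for an arbitrary logic $\mathcal{S}$, so applying it with $\mathcal{S}^{re}$ in the role of $\mathcal{S}$ is legitimate; this causes no difficulty, since $\mathcal{S}^{re}$ is itself a Hilbert-style, hence monotonic, logic.
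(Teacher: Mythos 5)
Your proof is correct, but it differs from the paper's in a worthwhile way, mainly in the forward direction. The paper proves $\mathcal{S}^l=\mathcal{S}^{re}\Rightarrow(\mathcal{S}^{re})^l=\mathcal{S}^l$ by two separate inclusions: it gets $(\lrri)^l\,\subseteq\,\vdash^l$ from Lemma~\ref{lem:Sl-S,Sre-S}(iii) together with Theorem~\ref{thm:l1-l2,re1-re2}, and then shows $\vdash^l\,\subseteq\,(\lrri)^l$ by unfolding the definition of $\vdash^l$ and exhibiting a witness $\Delta$. You instead observe that $(\cdot)^l$ is an operation on the relation alone, so equality $\vdash^l\,=\,\lrri$ licenses the substitution $(\lrri)^l=(\vdash^l)^l$, and idempotency (Lemma~\ref{lem:Sl-S,Sre-S}(ii)) finishes it in one line; this is shorter, needs neither Theorem~\ref{thm:l1-l2,re1-re2} nor any definitional unfolding, and is arguably the conceptually right argument. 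For the converse, the two proofs coincide in substance: your isolated observation $(\lrri)^l\,\subseteq\,\lrri$, obtained by instantiating the second clause of Lemma~\ref{lem:Sl-S,Sre-S}(i) at the monotonic logic $\mathcal{S}^{re}$, is exactly what the paper does inline (it takes the witness $\Delta$ with $\Delta\lrri\alpha$ and invokes monotonicity of $\mathcal{S}^{re}$ to conclude $\Gamma\lrri\alpha$); both proofs then close the loop with the known inclusion $\lrri\,\subseteq\,\vdash^l$ from \cite[Theorem 3.6]{BasuChakraborty2021}. Your hypothesis check --- that Lemma~\ref{lem:Sl-S,Sre-S}(i) applies to $\mathcal{S}^{re}$ because $\mathcal{S}^{re}$ is itself a Hilbert-style, hence monotonic, logic --- is exactly the point that needs flagging, and it holds. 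What your packaging buys is reusability and brevity; what the paper's buys is a self-contained trace through the witnesses, which makes visible where the condition $\var(\Delta)\subseteq\var(\alpha)$ enters and is then discarded.
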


\begin{proof}
    Suppose $\mathcal{S}^l=\mathcal{S}^{re}$, i.e., $\vdash^l\,=\,\lrri$. Now, by Lemma \ref{lem:Sl-S,Sre-S} (iii), $\lrri\,\subseteq\,\vdash$. So, by Theorem \ref{thm:l1-l2,re1-re2}, $(\lrri)^l\,\subseteq\,\vdash^l$. Now, suppose $\Gamma\cup\{\alpha\}\subseteq\lang$ such that $\Gamma\vdash^l\alpha$. Then, there exists $\Delta\subseteq\Gamma$ such that $\var(\Delta)\subseteq\var(\alpha)$ and $\Delta\vdash\alpha$. Clearly, $\Delta\vdash^l\alpha$ as well. This implies that $\Delta\lrri\alpha$, since $\vdash^l\,=\,\lrri$. Now, as $\Delta\subseteq\Gamma$, $\var(\Delta)\subseteq\var(\alpha)$, and $\Delta\lrri\alpha$, $\Gamma(\lrri)^l\alpha$. Thus, $\vdash^l\,\subseteq\,(\lrri)^l$. Hence, $(\lrri)^l\,=\,\vdash^l$.

    Conversely, suppose $(\lrri)^l\,=\,\vdash^l$. We know that $\lrri\,\subseteq\,\vdash^l$. Thus, we only need to show that $\vdash^l\,\subseteq\,\lrri$. Suppose $\Gamma\cup\{\alpha\}\subseteq\lang$ such that $\Gamma\vdash^l\alpha$. Then, by our assumption, $\Gamma(\lrri)^l\alpha$. So, there exists $\Delta\subseteq\Gamma$ such that $\var(\Delta)\subseteq\var(\alpha)$ and $\Delta\lrri\alpha$. Now, as $\mathcal{S}^{re}$ is a Hilbert-style logic, it is monotonic. This implies that $\Gamma\lrri\alpha$. Thus, $\vdash^l\,\subseteq\,\lrri$. Hence $\vdash^l\,=\,\lrri$.
\end{proof}

\section{Relational companions of a logical structure}

We now let go of the formula algebras and land in the arena of logical structures that were described in the introduction. The attempt here would be to generalize the notion of a logic of variable inclusion. In doing so, we will be able to capture a lot more than just the logics of left variable inclusion.

\begin{definition}
    Suppose $\mathcal{S}=\langle\lang,\vdash\rangle$ is a logical structure and $\varrho\subseteq\pow(\lang)\times\lang$. 
    \begin{enumerate}[label=(\roman*)]
        \item The \emph{$\varrho$-companion} of $\mathcal{S}$ is the logical structure $\mathcal{S}^\varrho=
        \langle\lang,\vdash^\varrho\rangle$, where $\vdash^\varrho\,\subseteq\pow(\lang)\times\lang$ is defined as follows. For any $\Gamma\cup\{\alpha\}\subseteq\lang$,
        \[
        \Gamma\vdash^\varrho\alpha\hbox{ iff there is a }\Delta\subseteq\Gamma\hbox{ such that }(\Delta,\alpha)\in\varrho\hbox{ and }\Delta\vdash\alpha.
        \]
        \item The \emph{pure $\varrho$-companion} of $\mathcal{S}$ is the logical structure $\mathcal{S}^{p\varrho}=
        \langle\lang,\vdash^{p\varrho}\rangle$, where $\vdash^{p\varrho}\,\subseteq\pow(\lang)\times\lang$ is defined as follows. For any $\Gamma\cup\{\alpha\}\subseteq\lang$,
        \[
        \Gamma\vdash^{p\varrho}\alpha\hbox{ iff there is a }\Delta\subseteq\Gamma\hbox{ such that }\Delta\neq\emptyset,(\Delta,\alpha)\in\varrho\hbox{ and }\Delta\vdash\alpha.
        \]
    \end{enumerate}
    
    Any such $\varrho$-companion ($p\varrho$-companion) $\mathcal{S}^\varrho$ ($\mathcal{S}^{p\varrho}$) is called a \emph{relational companion} (\emph{pure relational companion}) of $\mathcal{S}$.
\end{definition}

\begin{example}
    Suppose $\mathcal{S}=\langle\lang,\vdash\rangle$ is a logic (in the usual sense).
    \begin{enumerate}[label=(\roman*)]
        \item The left variable inclusion companion $\mathcal{S}^l$ is the relational companion $\mathcal{S}^L=\langle\lang,\vdash^L\rangle$ of $\mathcal{S}$, where the relation $L\subseteq\pow(\lang)\times\lang$ is defined by variable inclusion from left to right, i.e., $(\Delta,\alpha)\in L$ iff $\var(\Delta)\subseteq\var(\alpha)$. Thus, $\mathcal{S}^l=\mathcal{S}^L$, the $L$-companion of $\mathcal{S}$.
        \item The \emph{pure left variable inclusion companion} of $\mathcal{S}$, denoted by $\mathcal{S}^{pl}=\langle\lang,\vdash^{pl}\rangle$, is defined (e.g., in \cite{BonzioPaoliPraBaldi2022,PaoliPraBaldiSzmuc2021}) as follows. For any $\Gamma\cup\{\alpha\}\subseteq\lang$,
        \[
        \Gamma\vdash^{pl}\alpha\;\hbox{iff there is a}\;\Delta\subseteq\Gamma\;\hbox{such that}\;\Delta\neq\emptyset,\var(\Delta)\subseteq\var(\alpha),\;\hbox{and}\;\Delta\vdash\alpha.
        \]
        Clearly, $\mathcal{S}^{pl}$ is the pure $L$-companion of $\mathcal{S}$, where the relation $L$ is as defined in (i), i.e., $\mathcal{S}^{pl}=\mathcal{S}^{pL}$.
    \end{enumerate}
\end{example}

\begin{example}
Suppose $\mathcal{S}=\langle\lang,\vdash\rangle$ is a monotonic logic (in the usual sense). 
    \begin{enumerate}[label=(\roman*)]
        \item The right variable inclusion companion of $\mathcal{S}$, denoted by $\mathcal{S}^r=\langle\lang,\vdash^r\rangle$ is defined (e.g., in \cite{PaoliPraBaldiSzmuc2021}) as follows. For any $\Gamma\cup\{\alpha\}\subseteq\lang$, 
        \[
        \Gamma\vdash^{r}\alpha\;\hbox{iff }\;\Gamma\;\hbox{contains an \emph{$\mathcal{S}$-antitheorem}, or}\;\Gamma\vdash\alpha\;\hbox{and}\;\var(\alpha)\subseteq\var(\Gamma).
        \]
        An $\mathcal{S}$-antitheorem is a set $\Sigma\subseteq\lang$ such that for every substitution $\sigma$, $\sigma(\Sigma)\vdash\varphi$ for all $\varphi\in\lang$, i.e., $\sigma(\Sigma)$ \emph{explodes} in $\mathcal{S}$ for every $\sigma$\footnote{Suppose $\lang$ is a formula algebra over a set of variables $V$ (which is the case when discussing variable inclusion logics). A \emph{substitution} is any function $\sigma:V\to\lang$ that extends to a unique endomorphism (also denoted by $\sigma$) from $\lang$ to itself via the universal mapping property.}. 

        $\mathcal{S}^r$ can be seen as the relational companion $\mathcal{S}^R=\langle\lang,\vdash^R\rangle$ of $\mathcal{S}$, where the relation $R\subseteq\pow(\lang)\times\lang$ is defined as follows. $(\Delta,\alpha)\in R$ iff either $\Delta$ is an $\mathcal{S}$-antitheorem or $\var(\alpha)\subseteq\var(\Delta)$.

        The proof that $\mathcal{S}^r=\mathcal{S}^R$ can be given as follows. Let $\Gamma\cup\{\alpha\}\subseteq\lang$.

        Suppose $\Gamma\vdash^R\alpha$. Then there exists a $\Delta\subseteq\Gamma$ such that $(\Delta,\alpha)\in R$ and $\Delta\vdash\alpha$. Now, since $(\Delta,\alpha)\in R$, either $\Delta$ is an $\mathcal{S}$-antitheorem, in which case $\Gamma$ contains an $\mathcal{S}$-antitheorem, or $\var(\alpha)\subseteq\var(\Delta)$ and $\Delta\vdash\alpha$. In the latter case, as $\Delta\subseteq\Gamma$, $\var(\alpha)\subseteq\var(\Gamma)$ and since $\mathcal{S}$ is monotonic, $\Gamma\vdash\alpha$ as well. Thus, $\Gamma\vdash^r\alpha$.

        Conversely, suppose $\Gamma\vdash^r\alpha$. Then, either $\Gamma$ contains an $\mathcal{S}$-antitheorem or $\var(\alpha)\subseteq\var(\Gamma)$ and $\Gamma\vdash\alpha$. If $\Gamma$ contains an $\mathcal{S}$-antitheorem $\Delta$, then $\Delta\subseteq\Gamma$ such that $(\Delta,\alpha)\in R$ and $\Delta\vdash\alpha$ as $\Delta$ is an $\mathcal{S}$-antitheorem. On the other hand, if $\Gamma\vdash\alpha$ and $\var(\alpha)\subseteq\var(\Gamma)$, then $\Gamma$ is its own subset such that $(\Gamma,\alpha)\in R$ and $\Gamma\vdash\alpha$. Thus, in either case, there exists $\Delta\subseteq\Gamma$ such that $(\Delta,\alpha)\in R$ and $\Delta\vdash\alpha$, and so, $\Gamma\vdash^R\alpha$. Hence $\vdash^r\,=\,\vdash^R$, i.e., $\mathcal{S}^r=\mathcal{S}^R$.
        
        \item The \emph{pure right variable inclusion companion} of $\mathcal{S}$, denoted by $\mathcal{S}^{pr}=\langle\lang\vdash^{pr}\rangle$, is defined (e.g., in \cite{BonzioPaoliPraBaldi2022,PaoliPraBaldiSzmuc2021}) as follows. For any $\Gamma\cup\{\alpha\}\subseteq\lang$, 
        \[    \Gamma\vdash^{pr}\alpha\;\hbox{iff}\;\Gamma\vdash\alpha\;\hbox{and}\;\var(\alpha)\subseteq\var(\Gamma).
        \]

        $\mathcal{S}^{pr}$ can be seen as a relational companion $\mathcal{S}^{PR}=\langle\lang,\vdash^{PR}\rangle$ of $\mathcal{S}$, where the relation $PR\subseteq\pow(\lang)\times\lang$ is defined by variable inclusion from right to left, i.e., $(\Delta,\alpha)\in PR$ iff $\var(\alpha)\subseteq\var(\Delta)$. This can be shown with a similar argument as for the right variable inclusion companions above. The monotonicity of $\mathcal{S}$ is required for showing that $\vdash^{PR}\,\subseteq\,\vdash^{pr}$. 

        Although $\mathcal{S}^{pr}$ can be seen as a relational companion of $\mathcal{S}$, it cannot be described as a pure relational companion of $\mathcal{S}$, unless $\mathcal{S}$ is a logic without antitheorems. However, if $\mathcal{S}$ is a logic without antitheorems, then $\mathcal{S}^{pr}=\mathcal{S}^{pR}$, where $R$ is the relation used in (i).
    \end{enumerate}
\end{example}

\begin{example}
    Some more inclusion logics have been introduced in \cite{CaleiroMarcelinoFilipe2020} as generalizations of the left and right variable inclusion logics. In these companion logics, the containment requirement is extended to classes of subformulas. We might call these the \emph{left} and \emph{right subformula inclusion companion logics} (the actual names require some additional machinery and hence we avoid these). While the left subformula inclusion companions of a logic can be seen as relational companions of it, the right subformula inclusion companions of a monotonic logic can be seen as relational companions of it. This is much like the case for left and right variable inclusion companions.
\end{example}

It is easy to see that the left, pure left, and pure right variable inclusion companions of a logic $\mathcal{S}=\langle\lang,\vdash\rangle$, with a unary (negation) operator $\neg$ in the signature, are all paraconsistent with respect to $\neg$, i.e., there exists $\alpha,\beta\in\lang$ such that $\{\alpha,\neg\alpha\}\not\vdash\beta$. In other words, the principle of explosion, ECQ, with respect to $\neg$ (we call this $\neg$-ECQ), fails in these companion logics (see \cite{BasuRoy2023} for more on the paraconsistency of these companion logics). The right variable inclusion companion of a logic is, however, not necessarily paraconsistent with respect to a $\neg$. Thus, not all relational companions of a logic, with such a unary operator $\neg$, are paraconsistent with respect to $\neg$. It can also be observed from the definition of a relational companion that the matter of paraconsistency depends on the relation. The following example from \cite{deSouzaCosta-LeiteDias2021} can be seen as a relational companion created with the intention of obtaining a paraconsistent logical structure. The authors have called this process \emph{paraconsistentization}.

\begin{example}
    Suppose $\mathcal{S}=\langle\lang,\vdash\rangle$ is a logical structure. Then $\mathcal{S}^\mathbb{P}=\langle\lang,\vdash^\mathbb{P}\rangle$ is the logical structure, where $\vdash^\mathbb{P}\subseteq\pow(\lang)\times\lang$ is defined as follows. For any $\Gamma\cup\{\alpha\}\subseteq\lang$,
    \[
    \Gamma\vdash^\mathbb{P}\alpha\hbox{ iff there is a }\Delta\subseteq\Gamma\hbox{ such that }\Delta\hbox{ is }\mathcal{S}\hbox{-nontrivial and }\Delta\vdash\alpha
.    \]
A set $\Delta$ is $\mathcal{S}$-nontrivial, if there exists $\varphi\in\lang$ such that $\Delta\not\vdash\varphi$.
\end{example}

\begin{theorem}\label{thm:SingleRelComp}
    Suppose $\mathcal{S}=\langle\lang,\vdash\rangle$ is a logical structure and $\varrho\subseteq\pow(\lang)\times\lang$. 
    \begin{enumerate}[label=(\roman*)]
        \item Then, $\mathcal{S}^\varrho=\langle\lang,\vdash^\varrho\rangle$ and $\mathcal{S}^{p\varrho}=\langle\lang,\vdash^{p\varrho}\rangle$ are monotonic.
        \item If $\mathcal{S}$ is monotonic, then $\vdash^\varrho,\vdash^{p\varrho}\,\subseteq\,\vdash$.
        \item $(\mathcal{S}^\varrho)^\varrho=\mathcal{S}^\varrho$, i.e., $(\vdash^\varrho)^\varrho\,=\,\vdash^\varrho$, and $(\mathcal{S}^{p\varrho})^{p\varrho}=\mathcal{S}^{p\varrho}$, i.e., $(\vdash^{p\varrho})^{p\varrho}\,=\,\vdash^{p\varrho}$.
    \end{enumerate}
\end{theorem}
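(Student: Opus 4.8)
The plan is to verify each part directly from the definition of the $\varrho$-companion (and its pure variant), proving the items in order so that the later parts can lean on the earlier ones; the whole argument parallels Lemma \ref{lem:Sl-S,Sre-S}, now carried out for an arbitrary relation $\varrho$ in place of the variable-inclusion relation.

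For part (i), I would reason that a witness for $\Gamma$ is automatically a witness for any superset. Suppose $\Gamma\vdash^\varrho\alpha$ and $\Gamma\subseteq\Gamma'$; by definition there is $\Delta\subseteq\Gamma$ with $(\Delta,\alpha)\in\varrho$ and $\Delta\vdash\alpha$. Since $\Delta\subseteq\Gamma\subseteq\Gamma'$, the same $\Delta$ witnesses $\Gamma'\vdash^\varrho\alpha$, so $\mathcal{S}^\varrho$ is monotonic. The pure case is identical, since the extra clause $\Delta\neq\emptyset$ is a property of the witness alone and is untouched by enlarging $\Gamma$. For part (ii), I would assume $\mathcal{S}$ monotonic and take a witness $\Delta\subseteq\Gamma$ for $\Gamma\vdash^\varrho\alpha$; applying monotonicity of $\mathcal{S}$ to $\Delta\subseteq\Gamma$ upgrades $\Delta\vdash\alpha$ to $\Gamma\vdash\alpha$, giving $\vdash^\varrho\,\subseteq\,\vdash$. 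Because every pure witness is in particular a $\varrho$-witness, $\vdash^{p\varrho}\,\subseteq\,\vdash^\varrho$, so $\vdash^{p\varrho}\,\subseteq\,\vdash$ follows at once.

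For part (iii), I would first observe that one inclusion is free. By part (i), $\mathcal{S}^\varrho$ is itself a monotonic logical structure, so applying part (ii) to $\mathcal{S}^\varrho$ with the same relation $\varrho$ yields $(\vdash^\varrho)^\varrho\,\subseteq\,\vdash^\varrho$; applying part (ii) to $\mathcal{S}^{p\varrho}$ gives $(\vdash^{p\varrho})^{p\varrho}\,\subseteq\,\vdash^{p\varrho}$. The reverse inclusions are then the only thing left, and the idea is to collapse the nested witnesses onto a single set. If $\Gamma\vdash^\varrho\alpha$ with witness $\Delta\subseteq\Gamma$, then $\Delta$ (as its own subset) witnesses $\Delta\vdash^\varrho\alpha$, and hence $\Delta$ witnesses $\Gamma(\vdash^\varrho)^\varrho\alpha$; thus $\vdash^\varrho\,\subseteq\,(\vdash^\varrho)^\varrho$. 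The pure case runs the same way, the only bookkeeping being that the single $\Delta$ reused at both levels is nonempty whenever the original is, so the nonemptiness clause survives each application.

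No step presents a genuine obstacle; the argument is a matter of carefully matching existential witnesses on the two sides of each equivalence. The closest thing to a subtlety is the reverse inclusion in part (iii), where one must resist looking for two genuinely different witnesses and instead recognize that a single $\Delta$ can serve at every level of the iterated construction, and, in the pure case, that reusing this $\Delta$ keeps the nonemptiness condition intact.
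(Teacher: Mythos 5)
Your proof is correct and follows essentially the same route as the paper's: a direct witness-passing argument for (i) and (ii), then deriving the forward inclusion of (iii) by applying (ii) to the monotonic structure $\mathcal{S}^\varrho$ (resp.\ $\mathcal{S}^{p\varrho}$) obtained from (i), and getting the reverse inclusion by reusing the single witness $\Delta$ at both levels. The handling of the nonemptiness clause in the pure case also matches the paper's remark that the argument is identical up to that bookkeeping.
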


\begin{proof}
\begin{enumerate}[label=(\roman*)]
    \item Suppose $\Gamma\cup\Sigma\cup\{\alpha\}\subseteq\lang$ such that $\Gamma\subseteq\Sigma$ and $\Gamma\vdash^\varrho\alpha$. Then, there exists $\Delta\subseteq\Gamma$ such that $(\Delta,\alpha)\in\varrho$ and $\Delta\vdash\alpha$. So, $\Delta\subseteq\Sigma$ as well. Thus, $\Sigma\vdash^\varrho\alpha$. Hence, $\mathcal{S}^\varrho$ is monotonic. 

    An almost identical argument can be used to show that $\mathcal{S}^{p\varrho}$ is monotonic.

    \item Suppose $\mathcal{S}$ is monotonic. Let $\Gamma\cup\{\alpha\}\subseteq\lang$ and $\Gamma\vdash^\varrho\alpha$. Then, there exists $\Delta\subseteq\Gamma$ such that $(\Delta,\alpha)\in\varrho$ and $\Delta\vdash\alpha$. So, by monotonicity, $\Gamma\vdash\alpha$. Hence, $\vdash^\varrho\,\subseteq\,\vdash$. It is easy to see that $\vdash^{p\varrho}\,\subseteq\,\vdash^\varrho$. Thus, if $\vdash^\varrho\,\subseteq\,\vdash$, then $\vdash^{p\varrho}\,\subseteq\,\vdash$ as well.

    \item By (i), $\mathcal{S}^\varrho$ is monotonic. So, by (ii), $(\vdash^\varrho)^\varrho\,\subseteq\,\vdash^\varrho$.
    
    Suppose $\Gamma\cup\{\alpha\}\subseteq\lang$ and $\Gamma\vdash^\varrho\alpha$. Then, there exists $\Delta\subseteq\Gamma$ such that $(\Delta,\alpha)\in\varrho$ and $\Delta\vdash\alpha$. Clearly, $\Delta\vdash^\varrho\alpha$ as well. Thus, $\Gamma(\vdash^\varrho)^\varrho\alpha$, and so, $\vdash^\varrho\,\subseteq\,(\vdash^\varrho)^\varrho$. Hence, $(\vdash^\varrho)^\varrho\,=\,\vdash^\varrho$.

    The argument for $\mathcal{S}^{p\varrho}$ is identical, except for the non-emptiness requirement on the $\Delta$ in the converse part.
\end{enumerate}
\end{proof}

\begin{remark}
    The above theorem generalizes the results concerning the left variable inclusion companion of a logic in Lemma \ref{lem:Sl-S,Sre-S}.
\end{remark}

\begin{theorem}\label{thm:DoubleRelComp2Logics}
    Suppose $\mathcal{S}_1=\langle\lang,\vdash_1\rangle,\mathcal{S}_2=\langle\lang,\vdash_2\rangle$ are logical structures such that $\vdash_1\,\subseteq\,\vdash_2$, and $\varrho,\sigma\subseteq\pow(\lang)\times\lang$ such that $\varrho\subseteq\sigma$. Then, $\vdash_1^\varrho\,\subseteq\,\vdash_2^\sigma$ and $\vdash_1^{p\varrho}\,\subseteq\,\vdash_2^{p\sigma}$.
\end{theorem}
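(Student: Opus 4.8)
The plan is to unfold the definition of the $\varrho$-companion and repackage the witnessing subset, just as in the first part of Theorem~\ref{thm:l1-l2,re1-re2} but now with two relations in play. Concretely, I would start by assuming $\Gamma\cup\{\alpha\}\subseteq\lang$ with $\Gamma\vdash_1^\varrho\alpha$. By definition this gives a witness $\Delta\subseteq\Gamma$ satisfying two conditions simultaneously: $(\Delta,\alpha)\in\varrho$ and $\Delta\vdash_1\alpha$. The entire argument hinges on showing that this same $\Delta$ witnesses $\Gamma\vdash_2^\sigma\alpha$, so no new subset need be constructed.

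The two key steps are then immediate from the hypotheses, one for each conjunct. First, since $\varrho\subseteq\sigma$, the relational membership lifts: $(\Delta,\alpha)\in\varrho$ yields $(\Delta,\alpha)\in\sigma$. Second, since $\vdash_1\,\subseteq\,\vdash_2$, the derivability lifts: $\Delta\vdash_1\alpha$ yields $\Delta\vdash_2\alpha$. Having both $(\Delta,\alpha)\in\sigma$ and $\Delta\vdash_2\alpha$ with $\Delta\subseteq\Gamma$, the definition of the $\sigma$-companion of $\mathcal{S}_2$ gives exactly $\Gamma\vdash_2^\sigma\alpha$. This establishes $\vdash_1^\varrho\,\subseteq\,\vdash_2^\sigma$.

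For the pure version, I would run the identical argument while additionally tracking the non-emptiness clause. Assuming $\Gamma\vdash_1^{p\varrho}\alpha$ produces a witness $\Delta\subseteq\Gamma$ with $\Delta\neq\emptyset$, $(\Delta,\alpha)\in\varrho$, and $\Delta\vdash_1\alpha$; the same two inclusions transfer the relational and derivability conditions to $\sigma$ and $\vdash_2$ respectively, while $\Delta\neq\emptyset$ is carried over unchanged. This yields $\Gamma\vdash_2^{p\sigma}\alpha$, hence $\vdash_1^{p\varrho}\,\subseteq\,\vdash_2^{p\sigma}$.

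I do not anticipate any genuine obstacle here: the result is a direct consequence of unfolding definitions, and the proof is essentially the relational abstraction of the first half of Theorem~\ref{thm:l1-l2,re1-re2}. The only point worth flagging is that, in contrast to the restricted-rules half of that theorem (which required a derivation-by-derivation translation and an appeal to monotonicity), here the witness $\Delta$ can simply be reused verbatim, so neither monotonicity nor any closure property of $\vdash_2$ is invoked.
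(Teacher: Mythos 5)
Your proposal is correct and follows essentially the same argument as the paper: reuse the witnessing $\Delta$, lift $(\Delta,\alpha)\in\varrho$ to $\sigma$ via $\varrho\subseteq\sigma$ and $\Delta\vdash_1\alpha$ to $\vdash_2$ via $\vdash_1\,\subseteq\,\vdash_2$, then conclude $\Gamma\vdash_2^\sigma\alpha$. The paper treats the pure case by saying the arguments are similar, which matches your explicit handling of the non-emptiness clause.
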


\begin{proof}
    Suppose $\Gamma\cup\{\alpha\}\subseteq\lang$ such that $\Gamma\vdash_1^\varrho\alpha$. Then, there exists $\Delta\subseteq\Gamma$ such that $(\Delta,\alpha)\in\varrho$ and $\Delta\vdash_1\alpha$. Since $\varrho\subseteq\sigma$, $(\Delta,\alpha)\in\sigma$ and as $\vdash_1\,\subseteq\,\vdash_2$, $\Delta\vdash_2\alpha$. Thus, $\Gamma\vdash_2^\sigma\alpha$. Hence, $\vdash_1^\varrho\,\subseteq\,\vdash_2^\sigma$.

    It can be proved that $\vdash_1^{p\varrho}\,\subseteq\,\vdash_2^{p\sigma}$ with similar arguments.
\end{proof}

\begin{corollary}\label{cor:SingleRel2Logics}
    The following are some immediate observations from the above theorem.
    \begin{enumerate}[label=(\roman*)]
        \item Suppose $\mathcal{S}_1=\langle\lang,\vdash_1\rangle, \mathcal{S}_2=\langle\lang,\vdash_2\rangle$ are logical structures such that $\vdash_1\,\subseteq\,\vdash_2$ and $\varrho\subseteq\pow(\lang)\times\lang$. Then, $\vdash_1^\varrho\,\subseteq\,\vdash_2^\varrho$ and $\vdash_1^{p\varrho}\,\subseteq\,\vdash_2^{p\varrho}$.
        \item Suppose $\mathcal{S}=\langle\lang,\vdash\rangle$ is a logical structure and $\varrho,\sigma\subseteq\pow(\lang)\times\lang$. Then $\vdash^\varrho,\vdash^\sigma\,\subseteq\,\vdash^{\varrho\,\cup\,\sigma}$ and $\vdash^{\varrho\,\cap\,\sigma}\,\subseteq\,\vdash^\varrho,\vdash^\sigma$.
    \end{enumerate}
\end{corollary}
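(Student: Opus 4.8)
The plan is to derive both parts as direct specializations of Theorem \ref{thm:DoubleRelComp2Logics}, since each sub-claim is obtained by fixing one of the two parameters (the consequence relation or the companion relation) and varying only the other.

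For part (i), I would apply Theorem \ref{thm:DoubleRelComp2Logics} with the given structures $\mathcal{S}_1,\mathcal{S}_2$ satisfying $\vdash_1\,\subseteq\,\vdash_2$, taking the single relation $\varrho$ in the role of \emph{both} $\varrho$ and $\sigma$. Since $\varrho\subseteq\varrho$ holds trivially, the theorem immediately yields both $\vdash_1^\varrho\,\subseteq\,\vdash_2^\varrho$ and $\vdash_1^{p\varrho}\,\subseteq\,\vdash_2^{p\varrho}$.

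For part (ii), I would instead fix the single structure $\mathcal{S}$, so that $\vdash_1\,=\,\vdash_2\,=\,\vdash$ and the hypothesis $\vdash\,\subseteq\,\vdash$ is automatic, and then vary the relations. For the union inclusions, I apply the theorem with the relation pairs $\varrho\subseteq\varrho\cup\sigma$ and $\sigma\subseteq\varrho\cup\sigma$ to obtain $\vdash^\varrho\,\subseteq\,\vdash^{\varrho\,\cup\,\sigma}$ and $\vdash^\sigma\,\subseteq\,\vdash^{\varrho\,\cup\,\sigma}$. For the intersection inclusions, I apply it with $\varrho\cap\sigma\subseteq\varrho$ and $\varrho\cap\sigma\subseteq\sigma$ to obtain $\vdash^{\varrho\,\cap\,\sigma}\,\subseteq\,\vdash^\varrho$ and $\vdash^{\varrho\,\cap\,\sigma}\,\subseteq\,\vdash^\sigma$. (The analogous statements for the pure companions follow from the same instantiations applied to the pure half of Theorem \ref{thm:DoubleRelComp2Logics}, though they are not part of the asserted claim.)

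Since every assertion reduces to a single invocation of Theorem \ref{thm:DoubleRelComp2Logics} under a trivial set-inclusion between the relevant relations, there is no genuine obstacle here; the only point requiring care is bookkeeping, namely matching each sub-claim with the correct instantiation of the two parameters---choosing which relation plays the role of $\varrho$ and which plays $\sigma$, and deciding whether it is the consequence relation or the companion relation that is held fixed.
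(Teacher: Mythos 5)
Your proof is correct and matches the paper's intent exactly: the paper states this corollary without proof as ``immediate observations from the above theorem,'' and your instantiations of Theorem \ref{thm:DoubleRelComp2Logics} (taking $\sigma=\varrho$ for part (i), and taking $\mathcal{S}_1=\mathcal{S}_2=\mathcal{S}$ with the inclusions $\varrho,\sigma\subseteq\varrho\cup\sigma$ and $\varrho\cap\sigma\subseteq\varrho,\sigma$ for part (ii)) are precisely the specializations the authors have in mind.
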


\begin{theorem}
    Suppose $\mathcal{S}=\langle\lang,\vdash\rangle$ is a logical structure and $\varrho,\sigma\subseteq\pow(\lang)\times\lang$.
    \begin{enumerate}[label=(\roman*)]
        \item $(\vdash^\varrho)^\sigma\,\subseteq\,\vdash^\varrho$. The equality holds if $\varrho\subseteq\sigma$.
        \item If $\mathcal{S}$ is monotonic, then $(\vdash^\varrho)^\sigma\,\subseteq\,\vdash^\sigma$.
        \item If $\varrho\subseteq\sigma$, then $(\vdash^\varrho)^\sigma\,\subseteq\,\vdash^\sigma$.
        \item If $\vdash^\sigma\,\subseteq\,\vdash^\varrho$, then $\vdash^\sigma\,\subseteq\,(\vdash^\varrho)^\sigma$.
        \item If $\varrho\subseteq\sigma$, then $\vdash^\varrho\,=\,\vdash^\sigma$ iff $(\vdash^\varrho)^\sigma\,=\,\vdash^\sigma$.
    \end{enumerate}
    Analogous statements hold for pure relational companions of $\mathcal{S}$.
\end{theorem}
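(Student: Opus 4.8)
The plan is to treat all five parts as direct unfoldings of the definition of the $\varrho$- and $\sigma$-companions, carefully tracking the witnessing subsets, and to establish part (i) first, since the remaining parts reduce to it together with the results already proved in Theorem \ref{thm:SingleRelComp}, Theorem \ref{thm:DoubleRelComp2Logics}, and Corollary \ref{cor:SingleRel2Logics}. The recurring move is that a subset-of-a-subset is again a subset, so a witness obtained by nesting one companion inside another can be reused as a single witness.

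For part (i), the inclusion $(\vdash^\varrho)^\sigma\,\subseteq\,\vdash^\varrho$ holds unconditionally: if $\Gamma(\vdash^\varrho)^\sigma\alpha$, then there is $\Delta\subseteq\Gamma$ with $(\Delta,\alpha)\in\sigma$ and $\Delta\vdash^\varrho\alpha$; unfolding the latter yields $\Delta'\subseteq\Delta$ with $(\Delta',\alpha)\in\varrho$ and $\Delta'\vdash\alpha$, and since $\Delta'\subseteq\Delta\subseteq\Gamma$, this same $\Delta'$ witnesses $\Gamma\vdash^\varrho\alpha$. For the converse under $\varrho\subseteq\sigma$, I would start from $\Gamma\vdash^\varrho\alpha$ with witness $\Delta$ and observe that $(\Delta,\alpha)\in\varrho\subseteq\sigma$ while $\Delta\vdash^\varrho\alpha$ holds trivially (with $\Delta$ as its own witness), so $\Delta$ itself witnesses $\Gamma(\vdash^\varrho)^\sigma\alpha$.

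Once part (i) is in hand, the rest are short. Part (v) is immediate: when $\varrho\subseteq\sigma$, part (i) gives $(\vdash^\varrho)^\sigma\,=\,\vdash^\varrho$, so each of the equalities $\vdash^\varrho\,=\,\vdash^\sigma$ and $(\vdash^\varrho)^\sigma\,=\,\vdash^\sigma$ rewrites into the other. For part (ii) I would apply Theorem \ref{thm:SingleRelComp}(ii) to obtain $\vdash^\varrho\,\subseteq\,\vdash$ under monotonicity, and then Corollary \ref{cor:SingleRel2Logics}(i) with $\mathcal{S}^\varrho$ and $\mathcal{S}$ as the two structures and $\sigma$ as the relation, to conclude $(\vdash^\varrho)^\sigma\,\subseteq\,\vdash^\sigma$; a one-line direct argument (upgrade $\Delta\vdash^\varrho\alpha$ to $\Delta\vdash\alpha$ by monotonicity, keeping the $\sigma$-witness) works equally well. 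Part (iii) follows by chaining the equality $(\vdash^\varrho)^\sigma\,=\,\vdash^\varrho$ from part (i) with $\vdash^\varrho\,\subseteq\,\vdash^\sigma$, the latter being Theorem \ref{thm:DoubleRelComp2Logics} applied to $\mathcal{S}$ against itself with $\varrho\subseteq\sigma$.

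The one part requiring an actual idea rather than bookkeeping is part (iv), and this is where I expect the main, if modest, obstacle. Here I would take $\Gamma\vdash^\sigma\alpha$ with witness $\Delta$, so $(\Delta,\alpha)\in\sigma$ and $\Delta\vdash\alpha$; the crux is to upgrade $\Delta\vdash\alpha$ to $\Delta\vdash^\varrho\alpha$ on the \emph{same} subset. This is achieved by first noting $\Delta\vdash^\sigma\alpha$ (with $\Delta$ as its own witness) and then invoking the hypothesis $\vdash^\sigma\,\subseteq\,\vdash^\varrho$ to obtain $\Delta\vdash^\varrho\alpha$; combined with $(\Delta,\alpha)\in\sigma$ this makes $\Delta$ a witness for $\Gamma(\vdash^\varrho)^\sigma\alpha$. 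The analogous statements for the pure companions follow by the identical arguments: every witnessing set used above is either reused unchanged or extracted from the definition of a companion, and in the pure case such an extracted witness is nonempty by definition, so the nonemptiness constraint propagates automatically throughout.
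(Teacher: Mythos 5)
Your proposal is correct, and parts (i), (ii), and (iv) follow essentially the same route as the paper: the first inclusion of (i) via monotonicity of $\mathcal{S}^\varrho$ (you unfold it directly, the paper cites Theorem \ref{thm:SingleRelComp}(i)--(ii), which amounts to the same witness-passing argument), the converse of (i) by reusing the $\varrho$-witness as a $\sigma$-witness, (ii) via Theorem \ref{thm:SingleRelComp}(ii) plus Corollary \ref{cor:SingleRel2Logics}(i), and (iv) by the same ``promote $\Delta\vdash\alpha$ to $\Delta\vdash^\sigma\alpha$, then apply the hypothesis'' trick. Where you genuinely diverge is in making part (i) the workhorse for the rest: you obtain (iii) by chaining the equality $(\vdash^\varrho)^\sigma=\vdash^\varrho$ with $\vdash^\varrho\subseteq\vdash^\sigma$ (Theorem \ref{thm:DoubleRelComp2Logics} applied to $\mathcal{S}$ against itself), and you reduce (v) to a two-line substitution using that same equality. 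The paper instead proves (iii) by a fresh nested-witness extraction, and for the converse of (v) it redoes a direct unfolding argument (extracting $\Delta'\subseteq\Delta\subseteq\Gamma$ to get $\vdash^\sigma\subseteq\vdash^\varrho$) before invoking Theorem \ref{thm:DoubleRelComp2Logics}. Your decomposition buys brevity and makes the dependency structure cleaner --- (v) is visibly a formal consequence of (i) --- while the paper's itemwise direct proofs are self-contained and do not require the reader to track which earlier part each step leans on. Your closing remark on the pure companions is also sound: every witness you use is either reused unchanged or extracted from a pure companion, hence nonempty, so the arguments transfer verbatim, which is exactly what the paper's ``similar arguments'' claim amounts to.
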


\begin{proof}
    \begin{enumerate}[label=(\roman*)]
        \item The $\varrho$-companion of $\mathcal{S}$, $\mathcal{S}^\varrho$ is monotonic, by Theorem \ref{thm:SingleRelComp}(i). So, by Theorem \ref{thm:SingleRelComp}(ii), $(\vdash^\varrho)^\sigma\,\subseteq\,\vdash^\varrho$.

        Now, suppose $\varrho\subseteq\sigma$ and $\Gamma\cup\{\alpha\}\subseteq\lang$ such that $\Gamma\vdash^\varrho\alpha$. Then, there exists $\Delta\subseteq\Gamma$ such that $(\Delta,\alpha)\in\varrho$ and $\Delta\vdash\alpha$. Clearly, $\Delta\vdash^\varrho\alpha$. Since $\varrho\subseteq\sigma$, $(\Delta,\alpha)\in\sigma$. Thus, $\Gamma(\vdash^\varrho)^\sigma\alpha$. So, $\vdash^\sigma\,\subseteq\,(\vdash^\varrho)^\sigma$. Hence, $(\vdash^\varrho)^\sigma\,=\,\vdash^\varrho$.

        \item Suppose $\mathcal{S}$ is monotonic. Then, by Theorem \ref{thm:SingleRelComp}(ii), $\vdash^\varrho\,\subseteq\,\vdash$. Hence, by Corollary \ref{cor:SingleRel2Logics}(i), $(\vdash^\varrho)^\sigma\,\subseteq\,\vdash^\sigma$.

        \item Suppose $\varrho\subseteq\sigma$ and $\Gamma\cup\{\alpha\}\subseteq\lang$ such that $\Gamma(\vdash^\varrho)^\sigma\alpha$. Then, there exists $\Delta\subseteq\Gamma$ such that $(\Delta,\alpha)\in\sigma$ and $\Delta\vdash^\varrho\alpha$. So, there exists $\Delta^\prime\subseteq\Delta$ such that $(\Delta^\prime,\alpha)\in\varrho$ and $\Delta^\prime\vdash\alpha$. Now, $\Delta^\prime\subseteq\Gamma$, and as $\varrho\subseteq\sigma$, $(\Delta^\prime,\alpha)\in\sigma$. So, $\Gamma\vdash^\sigma\alpha$. Thus, $(\vdash^\varrho)^\sigma\,\subseteq\,\vdash^\sigma$.

        \item Suppose $\vdash^\sigma\,\subseteq\,\vdash^\varrho$ and $\Gamma\cup\{\alpha\}\subseteq\lang$ such that $\Gamma\vdash^\sigma\alpha$. Then, there exists $\Delta\subseteq\Gamma$ such that $(\Delta,\alpha)\in\sigma$ and $\Delta\vdash\alpha$. Clearly, $\Delta\vdash^\sigma\alpha$. Now, as $\vdash^\sigma\,\subseteq\,\vdash^\varrho$, $\Delta\vdash^\varrho\alpha$ as well. Thus, $\Gamma(\vdash^\varrho)^\sigma\alpha$. Hence, $\vdash^\sigma\,\subseteq\,(\vdash^\varrho)^\sigma$.

        \item Suppose $\varrho\subseteq\sigma$. Moreover, suppose $\vdash^\varrho\,=\,\vdash^\sigma$. Then, using parts (iii) and (iv) above, we have $(\vdash^\varrho)^\sigma\,=\,\vdash^\sigma$. 
        
        Conversely, suppose $(\vdash^\varrho)^\sigma\,=\,\vdash^\sigma$. Let $\Gamma\cup\{\alpha\}\subseteq\lang$ such that $\Gamma\vdash^\sigma\alpha$. So, $\Gamma(\vdash^\varrho)^\sigma\alpha$. Then, there exists $\Delta\subseteq\Gamma$ such that $(\Delta,\alpha)\in\sigma$ and $\Delta\vdash^\varrho\alpha$. This implies that there exists $\Delta^\prime\subseteq\Delta$ such that $(\Delta^\prime,\alpha)\in\varrho$ and $\Delta^\prime\vdash\alpha$. Since $\Delta^\prime\subseteq\Gamma$, $\Gamma\vdash^\varrho\alpha$. Thus. $\vdash^\sigma\,\subseteq\,\vdash^\varrho$. Now, as $\varrho\subseteq\sigma$, $\vdash^\varrho\,\subseteq\,\vdash^\sigma$, by Theorem \ref{thm:DoubleRelComp2Logics}. Hence, $\vdash^\varrho\,=\,\vdash^\sigma$.
    \end{enumerate}

    The proofs for the pure relational companions of $\mathcal{S}$ can be constructed with similar arguments.
\end{proof}

\begin{definition}
    Suppose $A$ is a set and $\varrho\subseteq\pow(A)\times A$. Then, $\varrho$ is said to be \emph{downward directed} if for any $\Delta\cup\{\alpha\}\subseteq\lang$, $(\Delta,\alpha)\in\varrho$ implies that $(\Delta^\prime,\alpha)\in\varrho$ for all $\Delta^\prime\subseteq\Delta$.
\end{definition}

\begin{theorem}
    Suppose $\mathcal{S}=\langle\lang,\vdash\rangle$ is a logical structure and $\varrho,\sigma\subseteq\pow(\lang)\times\lang$. If $\sigma$ is downward directed, then $(\vdash^\varrho)^\sigma\,\subseteq\,(\vdash^\sigma)^\varrho$. Hence, if $\varrho,\sigma$ are both downward directed, then $(\vdash^\varrho)^\sigma\,=\,(\vdash^\sigma)^\varrho$.
\end{theorem}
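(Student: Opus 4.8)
The plan is to unfold both the hypothesis $\Gamma(\vdash^\varrho)^\sigma\alpha$ and the desired conclusion $\Gamma(\vdash^\sigma)^\varrho\alpha$ into nested existential statements, and then to exhibit witnesses for the conclusion directly from those supplied by the hypothesis. So I would fix $\Gamma\cup\{\alpha\}\subseteq\lang$ with $\Gamma(\vdash^\varrho)^\sigma\alpha$. Unpacking the outer $\sigma$-companion gives a $\Delta\subseteq\Gamma$ with $(\Delta,\alpha)\in\sigma$ and $\Delta\vdash^\varrho\alpha$; unpacking $\Delta\vdash^\varrho\alpha$ in turn yields a $\Delta^\prime\subseteq\Delta$ with $(\Delta^\prime,\alpha)\in\varrho$ and $\Delta^\prime\vdash\alpha$. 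Dually, to establish $\Gamma(\vdash^\sigma)^\varrho\alpha$ I must produce an $E\subseteq\Gamma$ with $(E,\alpha)\in\varrho$ and $E\vdash^\sigma\alpha$, the latter requiring in turn some $E^\prime\subseteq E$ with $(E^\prime,\alpha)\in\sigma$ and $E^\prime\vdash\alpha$.

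Next I would simply take $E=E^\prime=\Delta^\prime$. Three of the four required conditions are then immediate: $E\subseteq\Gamma$ since $\Delta^\prime\subseteq\Delta\subseteq\Gamma$; $(E,\alpha)=(\Delta^\prime,\alpha)\in\varrho$ by the inner witness; and $E^\prime\vdash\alpha$ since $\Delta^\prime\vdash\alpha$. The one remaining obligation, $(E^\prime,\alpha)=(\Delta^\prime,\alpha)\in\sigma$, is exactly where the hypothesis on $\sigma$ enters: we have $(\Delta,\alpha)\in\sigma$ and $\Delta^\prime\subseteq\Delta$, so downward directedness of $\sigma$ yields $(\Delta^\prime,\alpha)\in\sigma$ at once. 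This single appeal to downward directedness is the only nontrivial step of the argument — everything else is bookkeeping on subset inclusions — so, while the lemma is shallow, this is precisely the point at which the definition must be invoked; I would also note that no monotonicity of $\vdash$ is needed, in contrast to several earlier results.

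Finally, the \emph{hence} clause follows by symmetry. If both $\varrho$ and $\sigma$ are downward directed, the inclusion just proved applies verbatim after interchanging the roles of the two relations, giving $(\vdash^\sigma)^\varrho\,\subseteq\,(\vdash^\varrho)^\sigma$; combining this with $(\vdash^\varrho)^\sigma\,\subseteq\,(\vdash^\sigma)^\varrho$ produces the claimed equality $(\vdash^\varrho)^\sigma\,=\,(\vdash^\sigma)^\varrho$. I do not expect any genuine obstacle here beyond tracking the nesting $\Delta^\prime\subseteq\Delta\subseteq\Gamma$ carefully and remembering that downward directedness is required of the relation whose companion is applied \emph{last} in $(\vdash^\varrho)^\sigma$, namely $\sigma$.
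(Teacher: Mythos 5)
Your proof is correct and coincides with the paper's own argument: both unfold $\Gamma(\vdash^\varrho)^\sigma\alpha$ to obtain $\Delta^\prime\subseteq\Delta\subseteq\Gamma$ with $(\Delta^\prime,\alpha)\in\varrho$, $\Delta^\prime\vdash\alpha$, use downward directedness of $\sigma$ to get $(\Delta^\prime,\alpha)\in\sigma$, and then reuse $\Delta^\prime$ as the witness at both levels of $\Gamma(\vdash^\sigma)^\varrho\alpha$, with the equality following by symmetry. No discrepancies to report.
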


\begin{proof}
    Suppose $\sigma$ is downward directed. Let $\Gamma\cup\{\alpha\}\subseteq\lang$ such that $\Gamma(\vdash^\varrho)^\sigma\alpha$. Then, there exists $\Delta\subseteq\Gamma$ such that $(\Delta,\alpha)\in\sigma$ and $\Delta\vdash^\varrho\alpha$. This implies that there exists $\Delta^\prime\subseteq\Delta$ such that $(\Delta^\prime,\alpha)\in\varrho$ and $\Delta^\prime\vdash\alpha$. Now, as $\sigma$ is downward directed, $(\Delta^\prime,\alpha)\in\sigma$. Then, $\Delta^\prime\vdash^\sigma\alpha$. Now, since $(\Delta^\prime,\alpha)\in\varrho$ and $\Delta^\prime\subseteq\Gamma$, this implies that $\Gamma(\vdash^\sigma)^\varrho\alpha$. Hence, $(\vdash^\varrho)^\sigma\,\subseteq\,(\vdash^\sigma)^\varrho$.

    Thus, if $\varrho$ is also downward directed, then $(\vdash^\sigma)^\varrho\,\subseteq\,(\vdash^\varrho)^\sigma$, and hence, in that case, $(\vdash^\varrho)^\sigma\,=\,(\vdash^\sigma)^\varrho$.
\end{proof}

It is not hard to see that, for any logic $\mathcal{S}=\langle\lang,\vdash\rangle$, $\vdash\alpha$ iff $\vdash^l\alpha$ for all $\alpha\in\lang$. The following theorem generalizes this to relational companions of logical structures.

\begin{theorem}
    Suppose $\mathcal{S}=\langle\lang,\vdash\rangle$ is a logical structure and $\varrho\subseteq\pow(\lang)\times\lang$ such that $(\emptyset,\alpha)\in\varrho$ for all $\alpha\in\lang$. Then, for any $\alpha\in\lang$, $\emptyset\vdash\alpha$ (written as $\vdash\alpha$) iff $\emptyset\vdash^\varrho\alpha$ (written as $\vdash^\varrho\alpha$).
\end{theorem}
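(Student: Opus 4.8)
The plan is to unwind the definition of the relational companion relation $\vdash^\varrho$ in the special case where the left-hand side is the empty set, and observe that both directions collapse to near-tautologies once we note that $\emptyset$ has a unique subset, namely itself. The single hypothesis $(\emptyset,\alpha)\in\varrho$ for all $\alpha\in\lang$ is precisely what makes the relational side-condition automatically satisfiable, so no antecedent can ever block the derivation.

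For the forward direction, I would assume $\emptyset\vdash\alpha$ and aim to produce a witness $\Delta\subseteq\emptyset$ with $(\Delta,\alpha)\in\varrho$ and $\Delta\vdash\alpha$. The only candidate is $\Delta=\emptyset$; the membership $(\emptyset,\alpha)\in\varrho$ holds by hypothesis, and $\emptyset\vdash\alpha$ is exactly the assumption. Hence $\emptyset\vdash^\varrho\alpha$ by the definition of the $\varrho$-companion. For the converse, I would assume $\emptyset\vdash^\varrho\alpha$, so by definition there is some $\Delta\subseteq\emptyset$ with $(\Delta,\alpha)\in\varrho$ and $\Delta\vdash\alpha$. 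Since $\emptyset$ is the only subset of $\emptyset$, we must have $\Delta=\emptyset$, and the second conjunct immediately yields $\emptyset\vdash\alpha$. Note that this direction does not even invoke the hypothesis on $\varrho$, which is only needed to guarantee the existence of the witness in the forward direction.

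There is no real obstacle here: the argument is structurally the same as the observation, recalled just before the statement, that $\vdash\alpha$ iff $\vdash^l\alpha$ for logics, since the left variable inclusion relation $L$ trivially satisfies $(\emptyset,\alpha)\in L$ (as $\var(\emptyset)=\emptyset\subseteq\var(\alpha)$). The only point worth stating explicitly is that the uniqueness of the subset $\emptyset$ forces $\Delta=\emptyset$ throughout, so the hypothesis $(\emptyset,\alpha)\in\varrho$ is exactly the minimal condition ensuring the equivalence holds for every $\alpha$; without it, the forward implication could fail precisely for those $\alpha$ with $(\emptyset,\alpha)\notin\varrho$.
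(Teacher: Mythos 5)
Your proof is correct and follows essentially the same argument as the paper: take $\Delta=\emptyset$ as the witness in the forward direction (using the hypothesis $(\emptyset,\alpha)\in\varrho$), and in the converse note that $\Delta\subseteq\emptyset$ forces $\Delta=\emptyset$. Your added remarks on the dispensability of the hypothesis in the converse direction and the analogy with the left variable inclusion case are accurate but not needed.
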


\begin{proof}
    Suppose $\alpha\in\lang$ such that $\vdash\alpha$. Then, as $(\emptyset,\alpha)\in\varrho$, $\vdash^\varrho\alpha$. Conversely, suppose $\vdash^\varrho\alpha$. Then, there exists $\Delta\subseteq\emptyset$ such that $(\Delta,\alpha)\in\varrho$ and $\Delta\vdash\alpha$. Clearly, $\Delta=\emptyset$. Thus, $\vdash\alpha$. 
\end{proof}

As discussed above, the left, pure left, and pure right variable inclusion companions of a logic are paraconsistent (see \cite{BasuRoy2023} for a detailed study on this). The following theorem generalizes this to certain relational companions of logical structures.

\begin{definition}
    Suppose $A, B$ are sets and $\varrho\subseteq A\times B$. Then, $\varrho$ is said to have \emph{finite reach} if for every $a\in A$, the set $\varrho_a=\{b\in B\mid\,(a,b)\in\varrho\}$ is finite.
\end{definition}

\begin{theorem}\label{thm:FinReach->NoFinTriv}
    Suppose $\mathcal{S}=\langle\lang,\vdash\rangle$ is a logical structure such that $\lang$ is infinite and $\varrho\subseteq\pow(\lang)\times\lang$ has finite reach. Then, for every finite $\Gamma\subseteq\lang$, there exists $\beta\in\lang$ such that $\Gamma\not\vdash^\varrho\beta$, i.e., every finite $\Gamma\subseteq\lang$ is nontrivial in $\mathcal{S}^\varrho$. The same is true for the pure $\varrho$-companion of $\mathcal{S}$.
\end{theorem}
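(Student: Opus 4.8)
The plan is to exploit a simple counting argument: the collection of formulas $\beta$ that a fixed finite $\Gamma$ can $\varrho$-entail is pinned down by the finite reach of $\varrho$, so $\Gamma$ can reach only finitely many candidates, leaving room in the infinite $\lang$ for a witness of nontriviality.

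First I would fix a finite $\Gamma\subseteq\lang$ and unwind the definition of $\vdash^\varrho$. If $\Gamma\vdash^\varrho\beta$, then by definition there is some $\Delta\subseteq\Gamma$ with $(\Delta,\beta)\in\varrho$ (and $\Delta\vdash\beta$, though I will not need this second clause). In particular $\beta\in\varrho_\Delta=\{\gamma\in\lang\mid(\Delta,\gamma)\in\varrho\}$ for some $\Delta\subseteq\Gamma$, so every $\beta$ with $\Gamma\vdash^\varrho\beta$ lies in the set
\[
T_\Gamma:=\bigcup_{\Delta\subseteq\Gamma}\varrho_\Delta.
\]

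Next I would observe that $T_\Gamma$ is finite. Since $\Gamma$ is finite, its power set $\pow(\Gamma)$ is finite, so the union defining $T_\Gamma$ ranges over only finitely many $\Delta$; and by the finite reach hypothesis each $\varrho_\Delta$ is finite. A finite union of finite sets is finite, so $T_\Gamma$ is finite. As $\lang$ is infinite, $\lang\setminus T_\Gamma\neq\emptyset$, and choosing any $\beta\in\lang\setminus T_\Gamma$ gives $\Gamma\not\vdash^\varrho\beta$, which is exactly the nontriviality of $\Gamma$ in $\mathcal{S}^\varrho$.

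Finally, for the pure $\varrho$-companion I would invoke the inclusion $\vdash^{p\varrho}\,\subseteq\,\vdash^\varrho$ already noted in the proof of Theorem \ref{thm:SingleRelComp}(ii): the very same witness $\beta\in\lang\setminus T_\Gamma$ then satisfies $\Gamma\not\vdash^{p\varrho}\beta$. I do not anticipate a serious obstacle here; the one point deserving care is recognizing that the derivability clause $\Delta\vdash\beta$ plays no role whatsoever — the entire restriction comes from the relational constraint $(\Delta,\beta)\in\varrho$ — so that monotonicity and every other property of $\vdash$ are irrelevant and the argument is purely combinatorial on $\varrho$.
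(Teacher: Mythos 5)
Your proposal is correct and follows essentially the same counting argument as the paper's proof: collect the finitely many $\varrho$-successors of subsets of $\Gamma$, and pick $\beta$ outside this finite union. The only cosmetic difference is that you dispatch the pure case via the inclusion $\vdash^{p\varrho}\,\subseteq\,\vdash^\varrho$, whereas the paper simply notes that an almost identical argument applies; both are fine.
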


\begin{proof}
    Suppose $\varrho$ has finite reach. Let $\Gamma$ be a finite subset of $\lang$. So, $\pow(\Gamma)$ is finite. Now, as $\varrho$ has finite reach, $\varrho_\Delta=\{\alpha\in\lang\mid\,(\Delta,\alpha)\in\varrho\}$ is finite for every $\Delta\in\pow(\Gamma)$. Thus, $\displaystyle\bigcup_{\Delta\in\pow(\Gamma)}\varrho_\Delta$, being a finite union of finite sets, is finite. Let $\beta\in\lang\setminus\displaystyle\bigcup_{\Delta\in\pow(\Gamma)}\varrho_\Delta$ (such a $\beta$ exists since $\lang$ is infinite). Then, $(\Delta,\beta)\notin\varrho$ for every $\Delta\subseteq\Gamma$. Thus, $\Gamma\not\vdash^\varrho\beta$. Hence, every finite subset of $\lang$ is non-trivial.

    An almost identical argument proves the same result for $\mathcal{S}^{p\varrho}=\langle\lang,\vdash^{p\varrho}\rangle$.
\end{proof}

\begin{corollary}
    Suppose $\mathcal{S}=\langle\lang,\vdash\rangle$ is a logic (in the usual sense) such that there is a unary (negation) operator $\neg$ in the signature. Moreover, let $\varrho\subseteq\pow(\lang)\times\lang$ be a relation with finite reach. Then, $\neg$-ECQ fails in $\mathcal{S}^\varrho$ and $\mathcal{S}^{p\varrho}$. Thus, $\mathcal{S}^\varrho$ and $\mathcal{S}^{p\varrho}$ are paraconsistent with respect to $\neg$.
\end{corollary}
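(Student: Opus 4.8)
The plan is to reduce the statement directly to Theorem~\ref{thm:FinReach->NoFinTriv}, since $\neg$-ECQ concerns the fixed two-element set $\{\alpha,\neg\alpha\}$, which is finite. The first step is to confirm the standing hypothesis of that theorem, namely that $\lang$ is infinite. Because $\mathcal{S}$ is a logic in the usual sense, $\lang$ is the (absolutely free) formula algebra over a set of variables $V$ of a type containing the unary connective $\neg$. Fixing any variable $p\in V$, the formulas $p,\neg p,\neg\neg p,\ldots$ are pairwise distinct terms, so $\lang$ is infinite and Theorem~\ref{thm:FinReach->NoFinTriv} applies with the given finite-reach relation $\varrho$.

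Next I would pick an arbitrary $\alpha\in\lang$ and set $\Gamma=\{\alpha,\neg\alpha\}$, a finite subset of $\lang$. Theorem~\ref{thm:FinReach->NoFinTriv} tells us that every finite subset of $\lang$ is nontrivial in $\mathcal{S}^\varrho$; specializing to $\Gamma$, there exists $\beta\in\lang$ with $\{\alpha,\neg\alpha\}\not\vdash^\varrho\beta$. Such a $\beta$ is exactly a witness to the failure of $\neg$-ECQ in $\mathcal{S}^\varrho$, so $\mathcal{S}^\varrho$ is paraconsistent with respect to $\neg$. The pure case is handled identically: Theorem~\ref{thm:FinReach->NoFinTriv} asserts the same nontriviality conclusion for $\mathcal{S}^{p\varrho}$, so the same $\Gamma=\{\alpha,\neg\alpha\}$ yields some $\beta$ with $\{\alpha,\neg\alpha\}\not\vdash^{p\varrho}\beta$, giving paraconsistency of $\mathcal{S}^{p\varrho}$.

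I do not expect any genuine obstacle here, as the corollary is essentially a specialization of the preceding theorem to the particular finite set $\{\alpha,\neg\alpha\}$. The only point requiring a word of care is the verification that $\lang$ is infinite, which is where the presence of the unary operator $\neg$ is used; this guarantees that the iterates $\neg^{n}p$ furnish infinitely many distinct formulas, so that the finiteness of $\varrho$'s reach genuinely leaves a formula $\beta$ outside the union $\bigcup_{\Delta\in\pow(\Gamma)}\varrho_\Delta$.
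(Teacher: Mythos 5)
Your proposal is correct and follows exactly the route the paper intends: the corollary is an immediate specialization of Theorem~\ref{thm:FinReach->NoFinTriv} to the finite set $\{\alpha,\neg\alpha\}$, for which the paper itself offers no separate proof. Your explicit check that $\lang$ is infinite (via the iterates $\neg^n p$ of a variable) is a worthwhile detail that the paper leaves implicit, since the theorem's hypothesis does require it.
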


\begin{definition}
    A logical structure $\mathcal{S}=\langle\lang,\vdash\rangle$ is said to be \emph{finitely trivializable} if there exists a finite $\Gamma\subseteq\lang$ such that $\Gamma\vdash\alpha$ for all $\alpha\in\lang$, i.e., there is a finite trivial subset of $\lang$.
\end{definition}

\begin{remark}\label{rem:FinReach->NoFinTriv}
    Suppose $\mathcal{S}=\langle\lang,\vdash\rangle$ is a logical structure such that $\lang$ is infinite and $\varrho\subseteq\pow(\lang)\times\lang$ is a relation with finite reach. Then, by Theorem \ref{thm:FinReach->NoFinTriv}, $\mathcal{S}^\varrho$ and $\mathcal{S}^{p\varrho}$ are not finitely trivializable.
\end{remark}

Suppose $\mathcal{S}=\langle\lang,\vdash\rangle$ is a logical structure. The generalized principle of explosion (gECQ) was introduced in \cite{BasuRoy2022} as follows. For every $\alpha\in\lang$, there exists $\beta\in\lang$ such that $\{\alpha,\beta\}\vdash\gamma$ for all $\gamma\in\lang$. A logic or logical structure in which gECQ fails is called NF-paraconsistent (NF stands for Negation-Free). 

spECQ (where sp stands for set-point), a principle of explosion introduced in \cite{BasuRoy2023}, can be described as follows. For each $\Gamma\subsetneq\lang$, there exists $\alpha\in\lang$ such that $\Gamma\cup\{\alpha\}\subsetneq\lang$ and $\Gamma\cup\{\alpha\}\vdash\beta$ for all $\beta\in\lang$.

It is easy to see that, if $\mathcal{S}=\langle\lang,\vdash\rangle$ is a logical structure that is not finitely trivializable, then gECQ and spECQ fail in it. This is also discussed in \cite{BasuRoy2023}.

\begin{corollary}
    Suppose $\mathcal{S}=\langle\lang,\vdash\rangle$ is a logical structure such that $\lang$ is infinite and $\varrho\subseteq\pow(\lang)\times\lang$ is a relation with finite reach. Then, by Remark \ref{rem:FinReach->NoFinTriv}, gECQ and spECQ fail in $\mathcal{S}^\varrho$ and $\mathcal{S}^{p\varrho}$.
\end{corollary}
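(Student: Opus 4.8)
The plan is to reduce the corollary to the single fact, stated just before it, that a logical structure which is not finitely trivializable has both gECQ and spECQ failing; the hypotheses of the corollary deliver non-finite-trivializability directly through Remark \ref{rem:FinReach->NoFinTriv}. So I would begin by invoking that remark: since $\lang$ is infinite and $\varrho$ has finite reach, neither $\mathcal{S}^\varrho$ nor $\mathcal{S}^{p\varrho}$ is finitely trivializable. It then suffices to establish, for an arbitrary logical structure $\langle\lang,\vdash\rangle$ with $\lang$ infinite that is not finitely trivializable, that both gECQ and spECQ fail; this general implication applies verbatim to $\mathcal{S}^\varrho$ and $\mathcal{S}^{p\varrho}$.

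For gECQ, I would write out its negation explicitly: gECQ fails precisely when there is some $\alpha\in\lang$ for which no $\beta\in\lang$ makes $\{\alpha,\beta\}$ trivial. The key observation is that every such $\{\alpha,\beta\}$ is a finite set (of size at most two), so by non-finite-trivializability it is nontrivial. Hence, fixing any $\alpha\in\lang$ (which exists since $\lang$ is infinite, hence nonempty), no $\beta$ can trivialize $\{\alpha,\beta\}$, and gECQ fails.

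For spECQ, I would again negate the definition: spECQ fails when there is a proper subset $\Gamma\subsetneq\lang$ such that for every $\alpha$ with $\Gamma\cup\{\alpha\}\subsetneq\lang$, the set $\Gamma\cup\{\alpha\}$ is nontrivial. The natural witness is $\Gamma=\emptyset$, which is a proper subset of $\lang$ since $\lang$ is infinite. For any $\alpha\in\lang$, the set $\emptyset\cup\{\alpha\}=\{\alpha\}$ is a singleton, hence finite and therefore nontrivial by hypothesis; moreover $\{\alpha\}\subsetneq\lang$ because $\lang$ is infinite. Thus no $\alpha$ meets the demand of spECQ at $\Gamma=\emptyset$, and spECQ fails.

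Both arguments use only non-finite-trivializability of the companion structures, so they transfer immediately to $\mathcal{S}^\varrho$ and $\mathcal{S}^{p\varrho}$. The only subtlety worth watching is the properness and finiteness clauses: one must confirm that the chosen witnesses ($\{\alpha,\beta\}$ in the gECQ case, $\{\alpha\}$ in the spECQ case) are genuinely finite and, where required, proper subsets of $\lang$. Both follow at once from $\lang$ being infinite, so I expect this verification to be the only place where any care is needed; the remainder is a direct unwinding of the definitions of gECQ, spECQ, and finite trivializability.
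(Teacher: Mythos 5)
Your proposal is correct and follows the same route as the paper: the paper treats this corollary as immediate from Remark \ref{rem:FinReach->NoFinTriv} together with the fact (stated just before the corollary as ``easy to see'' and credited to the cited reference) that a logical structure which is not finitely trivializable fails both gECQ and spECQ. Your only addition is to actually prove that fact by exhibiting the finite witnesses $\{\alpha,\beta\}$ and $\{\alpha\}$, and your verification of the finiteness and properness clauses is sound, so the argument is complete.
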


\subsection{Hilbert-type logical structures and their restrictions}
In this subsection, we first generalize Hilbert-style logics to logical structures and then discuss `restricted' companions of these as generalizations of the restricted rules companions of Hilbert-style logics discussed in Section 2. We first note that any axiom in a Hilbert-style logic can also be regarded as a rule with an empty set of hypotheses. Thus, it is sufficient to deal with Hilbert-style logics induced by only a set of rules. Secondly, for the restricted rules companion of a logic, we used the variable inclusion restriction only to restrict all the rules of inference. This can be generalized to a situation where different rules are restricted in different ways.

\begin{definition}
    Suppose $\lang$ is a set and $\mathcal{R}\subseteq\pow(\lang)\times\lang$. Let $\vdash\,\subseteq\,\pow(\lang)\times\lang$ be defined as follows. For any $\Gamma\cup\{\alpha\}\subseteq\lang$, $\Gamma\vdash\alpha$ if there exists a finite sequence $(\beta_0,\ldots,\beta_n)$ of elements of $\lang$ with $\beta_n=\alpha$, and for each $0\le i\le n$, either $\beta_i\in\Gamma$, or there exists $\Gamma^\prime\subseteq\{\beta_0\ldots,\beta_{i-1}\}$ such that $(\Gamma^\prime,\beta_i)\in\mathcal{R}$. Then $\mathcal{S}=\langle\lang,\vdash\rangle$ is called the \emph{Hilbert-type logical structure induced by $\mathcal{R}$}.

    A \emph{Hilbert-type logical structure} $\langle\lang,\vdash\rangle$ is any logical structure that is induced by some $\mathcal{R}\subseteq\pow(\lang)\times\lang$.
\end{definition}

\begin{remark}\label{rem:Hilbert-Tarski}
    It is clear from the above definition that a Hilbert-type logical structure $\mathcal{S}=\langle\lang,\vdash\rangle$ is finitary and Tarski-type, i.e., reflexive, monotonic, and transitive (see \cite{RoyBasuChakraborty2023} for the definitions).
\end{remark}

\begin{definition}
    Suppose $\mathcal{S}=\langle\lang,\vdash\rangle$ is a Hilbert-type logical structure induced by $\mathcal{R}\subseteq\pow(\lang)\times\lang$ and $\Pi$ is a collection of relations from $\pow(\lang)$ to $\lang$, i.e., for any $\sigma\in\Pi$, $\sigma\subseteq\pow(\lang)\times\lang$. Let $\mathcal{R}^\Pi=\{(\Gamma,\alpha)\in\mathcal{R}\mid\,\hbox{there exists }\sigma\in\Pi\hbox{ such that }(\Gamma,\alpha)\in\sigma\}$. Then, the logical structure induced by $\mathcal{R}^\Pi$, denoted by $\mathcal{S}^\Pi=\langle\lang,\vdash^\Pi\rangle$, is called the \emph{$\Pi$-restricted companion} of $\mathcal{S}$.
\end{definition}

\begin{remark}
     Suppose $\mathcal{S}=\langle\lang,\vdash\rangle$ is a Hilbert-style logic with $\mathcal{R}$ as the set of rules of inference. Let $\Pi=\{\sigma\}$, where $\sigma\subseteq\pow(\lang)\times\lang$ is defined by $(\Gamma,\alpha)\in\sigma$ iff $\var(\Gamma)\subseteq\var(\alpha)$. Then, $\mathcal{R}^\Pi=\{(\Gamma,\alpha)\in\mathcal{R}\mid\,\var(\Gamma)\subseteq\var(\alpha)\}$, and hence $\mathcal{S}^\Pi=\langle\lang,\vdash^\Pi\rangle$, the $\Pi$-restricted companion of $\mathcal{S}$, is the restricted rules companion  of $\mathcal{S}$, $\mathcal{S}^{re}=\langle\lang,\vdash^{re}\rangle$.
\end{remark}

We can also see a $\Pi$-restricted companion of a Hilbert-type logical structure as a relational companion of a logical structure as follows.

Suppose $\mathcal{S}=\langle\lang,\vdash\rangle$ is a logical structure induced by $\mathcal{R}\subseteq\pow(\lang)\times\lang$ and $\Pi$ is a collection of relations from $\pow(\lang)$ to $\lang$. Then, $\mathcal{S}^\Pi=\langle\lang,\vdash^\Pi\rangle$ is the Hilbert-type logical structure induced by $\mathcal{R}^\Pi$, where $\mathcal{R}^\Pi$ is as described in the above definition. Let $\varrho\subseteq\pow(\lang)\times\lang$ be defined as follows. $(\Delta,\alpha)\in\varrho$ iff $\Delta\vdash^\Pi\alpha$. Then, $\mathcal{S}^\varrho=\langle\lang,\vdash^\varrho\rangle$ is the $\varrho$-companion of $\mathcal{S}$.

Now, suppose $\Gamma\cup\{\alpha\}\subseteq\lang$ and $\Gamma\vdash^\Pi\alpha$. So, there exists a finite sequence $(\beta_0,\ldots,\beta_n)$ of elements of $\lang$ with $\beta_n=\alpha$, and for each $0\le i\le n$, either $\beta_i\in\Gamma$, or there exists $\Gamma^\prime\subseteq\{\beta_0\ldots,\beta_{i-1}\}$ such that $(\Gamma^\prime,\beta_i)\in\mathcal{R}^\Pi$. Let $\Delta=\Gamma\cap\{\beta_0,\ldots,\beta_n\}$. Clearly, $\Delta\vdash^\Pi\alpha$, i.e., $(\Delta,\alpha)\in\varrho$. So, $\Gamma\vdash^\varrho\alpha$. Conversely, suppose $\Gamma\vdash^\varrho\alpha$. Then, there exists a $\Delta\subseteq\Gamma$ such that $(\Delta,\alpha)\in\varrho$, i.e., $\Delta\vdash^\Pi\alpha$ and $\Delta\vdash\alpha$. Now, since $\mathcal{S}^\Pi$ is a Hilbert-type logical structure, and hence, monotonic by Remark \ref{rem:Hilbert-Tarski}, $\Delta\vdash^\Pi\alpha$ implies $\Gamma\vdash^\Pi\alpha$. Hence, $\vdash^\Pi\,=\,\vdash^\varrho$.

Thus, the restricted rules companions of a logic can also be seen as relational companions of the logic.

\section{Conclusions and future directions}
In this article, we have proposed the relational companions of logical structures as generalizations of the variable inclusion companions of logics and the restricted rules companions of Hilbert-style logics. More properties of these companion logics, especially those of the $\Pi$-restricted companions of Hilbert-type logics, could be investigated further. 

Another, perhaps interesting, observation is the possibility of using a logical structure $\mathcal{S}_1=\langle\lang,\vdash_1\rangle$ to define a companion to another logical structure $\mathcal{S}_2=\langle\lang,\vdash_2\rangle$ in the following way. Since $\vdash_1\subseteq\pow(\lang)\times\lang$ is also a relation from $\pow(\lang)$ to $\lang$, we can define the $\vdash_1$-companion of $\mathcal{S}_2$. This is already the case in the way we have described a $\Pi$-restricted companion of a Hilbert-type logical structure as a relational companion at the end of the previous section. This line of investigation can be interesting from the perspective of combining logical structures as well, since this is, in a way, merging the two relations $\vdash_1$ and $\vdash_2$.

\bibliographystyle{amsplain}
\bibliography{CompLog}

\end{document}